\theoremstyle{plain}
\newtheorem{corollary}{Corollary}
\newtheorem{lemma}{Lemma}
\newtheorem{proposition}{Proposition}
\newtheorem{remark}{Remark}
\numberwithin{equation}{section}
\begin{document}
\title[The singular values of the logarithmic potential transform ]{The singular values of the logarithmic potential transform on bound states spaces of Landau Hamiltonian }

\author[M. El-Omari]{M.El-Omari}

\begin{abstract}
The singular values of the logarithmic potential transform on the generalized Bergmann space is calculated explicitly, too behavior in infinity. 
\end{abstract}

\maketitle

\section{Introduction}

Let $\mathbb{D}$ be the complex unit disk endowed with its Lebesgue measure $%
\mu $ and let $\partial \mathbb{D}$ be its boundary. Denote by $L^{2}(%
\mathbb{D},d\mu )$ the space of complex-valued measurable functions which are
$d\mu $ square integrable on $\mathbb{D}$. The logarithmic potential transform $:L^{2}(%
\mathbb{D})\rightarrow L^{2}(\mathbb{D})$ is defined by
\begin{equation}
\mathcal{L}[f](z)=-\frac{1}{\pi }\int_{\mathbb{D}}\frac{f\left( \xi \right)
}{\xi -z}log\left(\frac{1}{|z-\xi|}\right)d\mu \left( \xi \right)  \tag{1.1}
\end{equation}
This operator is very important as the transformed Cauchy and it often appears in Analysis \cite{L}%
.\\ The dimensional analysis \cite{B,L} and scaling arguments \cite{D2002} form an integral part in
theoretical physics to solve some important problems without doing much calculation.\\
The logarithmic potential in physics forms an interesting one as it provides some unusual
prediction about the system. Moreover, this potential can be used suitably to illustrate
some of the important features of field theory such as dimensional regularization and
renormalization. In most of our text books, this potential is not discussed at detail;
although the calculations are quite simple to demonstrate some of its unique features. We
have obtained the bound state energy of this logarithmic potential through uncertainty
principle, phase space quantization and Hellmann-Feynman theorem.\\
In \cite{AK} the authors have been dealing with the restriction of
$\mathcal{L}$ to the space $L_{a}^{2}(\mathbb{D})$ of analytic $\mu $-square
integrable on $\mathbb{D}$.\ \ They precisely have considered the projection
operator $P_{0}$: $L^{2}(\mathbb{D})\rightarrow L_{a}^{2}(\mathbb{D})$ and
they have proved that the singular values $\lambda _{k}$ of $\mathcal{L}%
P_{0} $, which turn out to be eigenvalues of the operator $\sqrt{(\mathcal{LP%
}_{0})^{\ast }(\mathcal{L}P_{0})}$) behave like $k^{-1}$ as $k$ goes to $%
\infty $.\ \ They also concluded that $\mathcal{L}P_{0}$ belongs to the
Schatten class $S_{1,\infty }$.\newline
Now, consider the following \textit{weighted} logarithmic potential transform
\begin{equation}
\mathcal{L}_{\sigma }[f](z)=-\frac{1}{\pi }\int_{\mathbb{D}}\frac{f\left(
\xi \right) }{\xi -z}log\left(\frac{1}{|z-\xi|}\right)\left( 1-\xi \overline{\xi }\right) ^{^{\sigma -2}}d\mu
\left( \xi \right) 
\end{equation}
defined on the space $L^{2,\sigma }(\mathbb{D})$ of complex-valued
measurable functions which are $(1-\xi \overline{\xi })^{\sigma -2}d\mu (\xi
)$-square integrable on $\mathbb{D}$ \ where $\sigma >1$\ is \ a fixed
parameter.\ We observe that the subspace $L_{a}^{2,\sigma }(\mathbb{D})$ of
analytic functions on $\mathbb{D}$ and belonging to $L^{2,\sigma }(\mathbb{D}%
)$ coincides with the eigenspace
\begin{equation}
\mathcal{A}_{0}^{\sigma }\left( \mathbb{D}\right) :=\left\{ \psi \in
L^{2,\sigma }(\mathbb{D}),\Delta _{\sigma }\psi =0\right\}  \tag{1.3}
\end{equation}
of the second order differential operator
\begin{equation}
\Delta _{\sigma }:=-4\left( 1-z\overline{z}\right) \left( \left( 1-z%
\overline{z}\right) \frac{\partial ^{2}}{\partial z\partial \overline{z}}%
-\sigma \overline{z}\frac{\partial }{\partial \overline{z}}\right)  \tag{1.4}
\end{equation}
\ known as the $\sigma $-weight Maass Laplacian and its discrete eingenvalues are given by
\begin{equation*}
\epsilon _{m}:=4m\left( \sigma -1-m\right) ,\ m=0,1,2,...,\left\lfloor
\left( \sigma -1\right) /2\right\rfloor
\end{equation*}
with their corresponding eigenspaces
\begin{equation}
\mathcal{A}_{m}^{\sigma }(\mathbb{D}):=\left\{ \psi \in L^{2,\sigma }\left(
\mathbb{D}\right) \text{ and }\Delta _{\sigma }\psi =\epsilon _{m}^{\sigma
}\psi \right\}  \tag{1.6}
\end{equation}
are here called \textit{generalized Bergman} spaces since... \

After noticing that, we here deal with analogous questions as in \cite{AK}%
 in the context of the weighted Cauchy transform $(1.2)$ and for
its restriction to the space $\mathcal{A}_{m}^{\sigma }(\mathbb{D})$.\ That
is , we are concerned with the operator $\mathcal{C}_{\sigma }P_{m}^{\sigma
} $ where $P_{m}^{\sigma }$ is the projection $L^{2,\sigma }(\mathbb{D}%
)\rightarrow \mathcal{A}_{m}^{\sigma }(\mathbb{D})$.\ \ The obtained results
are as follows.

Firstly , we find that the singular values of $\mathcal{L}_{\sigma
}P_{m}^{\sigma }$.\\

For $k\neq m$, can be expressed as
 $$\lambda_k=\sqrt{J_1+J_2+J_3}$$
where
$$J_1=\left(\frac{(1+k-m)_m}{m!(k-m+1)}\right)^2\sum_{n=0}^{\infty}{A_n\frac{\Gamma(2n+2k-2m+6-1)\Gamma(4\nu-2m-1)}{\Gamma(2n+2k-4m+4\nu+6)}}$$

$$J_2=\left(\frac{\alpha^{\nu,m}_k}{2\nu-m-1}\right)^2\sum_{n=0}^{\infty}{A_n\frac{\Gamma(4\nu-2m-1)\Gamma(2n+2)}{\Gamma(2n+4\nu-2m+1)}}$$

and 

$$ J_3=\frac{(1+k-m)_m \alpha^{\nu,m}_k}{m!(k-m+1)(2\nu-m-1)}\left(\sum_{n=0}^{\infty}{A_n\frac{\Gamma(k-m+2)\Gamma(4\nu-2m-1)}{\Gamma(4\nu-k-3m)}}\right)$$

For $k=m$ can be expressed as
\begin{equation}
\lambda^2_k=\frac{\alpha^{\nu,m}_k \left(2(2\nu-m)-1\right)}{8\left(\pi(2\nu-m+1)\right)}\sum_{n=0}^{\infty}{\frac{B_n}{n+2\nu-m}}
\end{equation}
where 
$$B_n=\sum_{n=0}^{\infty}{\frac{\Gamma(-m+1)\Gamma(2\nu-m)\Gamma(2(\nu-m)+1)}{n!\Gamma(2(\nu-m)\Gamma(2\nu-m+2)} }$$

$$\alpha^{\nu,m}_k = \frac{\Gamma(2)\Gamma(2(m-\nu)+1)}{\Gamma(m+1)\Gamma(2+m-2\nu)}$$
Secondly, we show that these singular values behave like
$$\lambda_k\sim C \sqrt{k^{m-4\nu+1}},\ \text{as}\ k\longrightarrow\infty$$
where $C$ is a constant.\\
The paper is organized as follows.\ \ In section 2, we review the definition
of the weighted logarithmic potential transform as well as some of its needed properties.\
\ Section 3 deals with some basic facts on the spectral theory of \ Maass
Laplacians on the Poincar\'{e} disk.\ \ In Section 4, a precise description
of the generalized Bergmann spaces is reviewed.\ \ Section 5 is devoted to
the computation of the singular values of the weighted logarithmic potential transform\\
The asymptotic behavior of theses singular values is established in Section 6.

\section{The weighted Logarithmic Potential transform $\mathcal L_{\nu}$}
\subsection{The case $\nu=1$}
Let $\mathbb{D}$ the complex unit disk endowed with its Lebesgue measure $\mu$ and let $\partial\mathbb{D}$  its boundary denote by $L^2(\mathbb{D})$ the space of complex-valued measurable functions on $\mathbb{D}$ with finite norm
\begin{equation*}
			\left\|f\right\| = \int_{\mathbb{D}}{|f(\xi)|^2d\mu(\xi)}  \tag{1.1}
	\end{equation*}
The Logarithmic Potential operator $\mathcal L:L^2(\mathbb{D})\rightarrow L^2(\mathbb{D})$ is defined by
\begin{equation}
	\mathcal L[f](z)=\int_{\mathbb{D}}{f(\xi)\log\left(\frac{1}{\left|\xi-z\right|}\right)d\mu(\xi)} \ \ \ \ \ \ \ \   \tag{1.2}
\end{equation}

\subsection{The case of $\nu\geq 1$}
We fix a real parameter $\nu$ such that $2\nu > 1$ and we consider the following weighted Logarithmic Potential transform
\begin{equation}
	\mathcal L_{\nu}[f](z)=\int_{\mathbb{D}}{f(\xi)\log\left(\frac{1}{\left|\xi-z\right|}\right)(1-\xi\overline{\xi})^{2\nu-2}d\mu(\xi)} \ \ \ \ \ \ \ \   \tag{1.4}
\end{equation}
defined on the space $L^{2,\nu}(\mathbb{D})$ complex-valued measurable functions which are $(1-\xi\overline{\xi})^{2\nu-2}d\mu(\xi)$-square integrable  on $\mathbb{D}$. As a convolution of $L^{2,\nu}$-functions with the compactly supported measure $\frac{(1-\xi)^{2\nu-2}}{\xi} \mathbb{1}_{\mathbb{D}}d\mu(\xi)$  $\mathcal L_{\nu} :L^{2,\nu}(\mathbb{D}) \rightarrow L^{2,\nu}(\mathbb{D})$ is obviously bounded. Moreover, it is not hard to show that $\mathcal L_{\nu}$ is in fact compact \cite{AKL1990}. This raises a question concerning the spectral picture of $\mathcal L_{\nu}$.

\section{The Landau Hamiltonian $H_{\protect\nu }$ on the Poincar\'{e} disk $%
\mathbb{D}$}

Let $\mathbb{D}$ =$\left\{ z\in \mathbb{C},z\overline{z}<1\right\} $ be the
complex unit disk with the Poincar\'{e} metric $ds^{2}=4\left( 1-z\overline{z%
}\right) ^{-2}dzd\overline{z}.$ $\mathbb{D}$ is a complete Riemannian 
manifold with all sectional curvature equal $-1.$ It has an ideal boundary $%
\partial \mathbb{D}$ identified with the circle $\left\{ \omega \in \mathbb{C%
},\omega \overline{\omega }=1\right\} .$ One refers to points $\omega \in
\partial \mathbb{D}$ as points at infinity. The geodesic distance between
two points $z$ and $w$ is given by%
\begin{equation}
\cosh d\left( z,w\right) =1+\frac{2\left( z-w\right) \left( \overline{z}-%
\overline{w}\right) }{\left( 1-z\overline{z}\right) \left( 1-w\overline{w}%
\right) }  \tag{2.1}
\end{equation}%
By \cite{FV} the Schr\"{o}dinger operator on $\mathbb{D}$ with
constant magnetic field of strength proportional to $\nu >0$ can be written
as $:$%
\begin{equation}
\mathcal{L}_{\nu }:=-(1-|z|^{2})^{2}\frac{\partial ^{2}}{\partial z\partial
\overline{z}}-\nu z\left( 1-\left\vert z\right\vert ^{2}\right) \frac{%
\partial }{\partial z}+\nu \overline{z}\left( 1-|z|^{2}\right) \frac{%
\partial }{\partial \overline{z}}+\nu ^{2}|z|^{2}.  \tag{2.2}
\end{equation}%
which is also called Maass Laplacian on the disk. A
slight modification of $\mathcal{L}_{\nu }$ is given by the operator
\begin{equation}
H_{\nu }:=4\mathcal{L}_{\nu }-4\nu ^{2}  \tag{2.3}
\end{equation}%
acting in the Hilbert space
\begin{equation}
L^{2,0}(\mathbb{D}):=\left\{ \varphi :\mathbb{D\rightarrow C},\int_{\mathbb{D%
}}\left\vert \varphi (z)\right\vert ^{2}\left( 1-\left\vert z\right\vert
^{2}\right) ^{-2}d\mu (z)<+\infty \right\} ,  \tag{2.4}
\end{equation}%
For our purpose, we shall consider the unitary equivalent realization $%
\widetilde{H}_{\nu }$ of the operator $H_{\nu }$ in the Hilbert space
\begin{equation}
L^{2,\nu }(\mathbb{D}):=\left\{ \varphi :\mathbb{D\rightarrow C},\int_{%
\mathbb{D}}\left\vert \varphi (z)\right\vert ^{2}\left( 1-|z|^{2}\right)
^{2\nu -2}d\mu (z)<+\infty \right\} ,  \tag{2.6}
\end{equation}%
which is defined by
\begin{equation}
\mathsf{\ }\widetilde{H}_{\nu }:=\mathfrak{Q}_{\nu }^{-1}H_{\nu }\mathfrak{Q}%
_{\nu },  \tag{2.7}
\end{equation}%
where $\mathfrak{Q}_{\nu }:L^{2,\nu }(\mathbb{D})\rightarrow L^{2,0}(\mathbb{%
D})$ is the unitary transformation defined by the map $\varphi \mapsto
\mathfrak{Q}_{\nu }\left[ \varphi \right] :=(1-|z|^{2})^{-\nu }\varphi .$ \
\ Different aspects of the spectral analysis of the operator $\widetilde{H}%
_{\nu }$ have been studied by many authors. For instance, note that $\widetilde{H}_{\nu }$
is an elliptic densely defined operator on the Hilbert space $L^{2,\nu
}\left( \mathbb{D}\right) $ and admits a unique self-adjoint realization
that we denote also by $\widetilde{H}_{\nu }.$ The spectrum\ of $\widetilde{H%
}_{\nu }$ in $L^{2,\nu }(\mathbb{D})$ consists of two parts: $(i)$\textit{\ }%
a continuous part $\left[ 1,+\infty \right[ $ , which correspond to \textit{%
scattering states}, $(ii)$ a finite number of eigenvalues (\textit{%
hyperbolic Landau levels}) of the form %
\begin{equation}
\epsilon _{m}^{\nu }:=4(\nu -m)\left( 1-\nu +m\right) ,m=0,1,2,\cdots ,\left[
\nu -\frac{1}{2}\right]   \tag{2.6}
\end{equation}%
with infinite degeneracy, provided that $2\nu >1$.\ \ To the eigenvalues in $\left( 2.6\right) $ correspond eigenfunctions which
are called \textit{bound states }since the particle in such a state cannot
leave the system without additionnal energy. A concrete description of these
bound states spaces will be the goal of the next section.

\section{The bound states spaces $\mathcal{A}_{\protect\nu ,m}^{2}(\mathbb{D})$}

Here, we consider the eigenspace
\begin{equation}
\mathcal{A}_{\nu ,m}^{2}(\mathbb{D}):=\left\{ \Phi :\mathbb{D}\mathbf{%
\rightarrow }\mathbb{C},\Phi \in L^{2,\nu }\left( \mathbb{D}\right) \text{
and }\widetilde{H}_{\nu }\Phi =\epsilon _{m}^{\nu }\Phi \right\}   \tag{3.1}
\end{equation}%
 See \cite{MO,W}, for the following proposition.
\begin{proposition} Let  $2\nu >1$ and $m=0,1,2,\cdots ,\left[ \nu -%
\frac{1}{2}\right] .$\textit{Then, we have}

$\left( i\right) $ \textit{an orthogonal basis} of \ $\mathcal{A}_{\nu
,m}^{2}(\mathbb{D})$ \textit{is given by the functions}

\begin{equation}
\phi _{k}^{\nu ,m}(z):=|z|^{|m-k|}(1-|z|^{2})^{-m}e^{-i\left( m-k\right)
\arg z}  \notag
\end{equation}%
\begin{equation*}
\times _{2}F_{1}\left( -m+\frac{m-k+|m-k|}{2},2\nu -m+\frac{|m-k|-m+k}{2}%
,1+|m-k|;|z|^{2}\right)
\end{equation*}%
$k=0,1,2,\cdots ,$\textit{\ in terms of a terminating }$_{2}F_{1}$\textit{%
Gauss hypergeometric function. }\\
$\left( ii\right) $ \textit{the norm square\ of  }$\phi _{k}^{\nu ,m}$%
\textit{\ in }$L^{2,\nu }(\mathbb{D})$\textit{\ is given by}
\begin{equation*}
\left\Vert \phi _{k}^{\nu ,m}\right\Vert ^{2}=\frac{\pi \left( \Gamma \left(
1+\left\vert m-k\right\vert \right) \right) ^{2}}{\left( 2(\nu -m)-1\right) }%
\frac{\Gamma \left( m-\frac{|m-k|+m-k}{2}+1\right) \Gamma \left( 2\nu -m-%
\frac{|m-k|+m-k}{2}\right) }{\Gamma \left( m+\frac{|m-k|-m+k}{2}+1\right)
\Gamma \left( 2\nu -m+\frac{|m-k|-m+k}{2}\right) }.
\end{equation*}
\end{proposition}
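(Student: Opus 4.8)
The plan is to diagonalize $\widetilde{H}_\nu$ by separation of variables in polar coordinates $z=re^{i\theta}$, exploiting the rotational invariance of the operator. First I would write out the eigenvalue equation $\widetilde{H}_\nu\Phi=\epsilon_m^\nu\Phi$ explicitly; equivalently, using $H_\nu=4\mathcal{L}_\nu-4\nu^2$ and the unitary conjugation $\Phi=\mathfrak{Q}_\nu^{-1}\psi=(1-|z|^2)^\nu\psi$, I would reduce it to $\mathcal{L}_\nu\psi=(\nu^2+\tfrac14\epsilon_m^\nu)\psi$ in $L^{2,0}(\mathbb{D})$, which is easier to handle since $\mathcal{L}_\nu$ is the Maass Laplacian of $(2.2)$. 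Because $\mathcal{L}_\nu$ commutes with the rotations $z\mapsto e^{i\tau}z$, the eigenspace splits into angular sectors, so I would look for solutions of the form $\Phi(z)=f(r)e^{i\ell\theta}$ with $\ell\in\mathbb{Z}$. Substituting this ansatz collapses the PDE into a second-order linear ODE for the radial profile $f$.

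Next I would normalize that radial equation. Factoring out the expected origin and boundary behaviour via $f(r)=r^{|\ell|}(1-r^2)^{-m}g(r^2)$ and changing variable to $x=r^2=|z|^2$, I expect the equation for $g$ to become the Gauss hypergeometric ODE $x(1-x)g''+(c-(a+b+1)x)g'-ab\,g=0$ with parameters $a,b,c$ matching those in the statement. Of its two local solutions at $x=0$, only the one regular at the origin, namely ${}_2F_1(a,b;c;x)$ with $c=1+|m-k|$ an integer, is admissible; imposing square integrability at the boundary $x\to1$ against the weight $(1-x)^{2\nu-2}$ then forces the first parameter $a$ to be a non-positive integer. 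This is exactly the termination condition, it simultaneously pins down the admissible range $m=0,1,\dots,[\nu-\tfrac12]$ through positivity of the boundary exponent, and the quantization reproduces the discrete Landau levels $\epsilon_m^\nu$. Writing $\ell=k-m$ then recovers the angular factor $e^{-i(m-k)\arg z}$ and the two parameter branches of the statement according to the sign of $m-k$.

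Orthogonality in part $(i)$ would split into two independent mechanisms. Functions $\phi_k^{\nu,m}$ and $\phi_{k'}^{\nu,m}$ carrying different angular momenta $\ell=k-m\neq\ell'=k'-m$ are orthogonal because the angular integral $\int_0^{2\pi}e^{i(\ell-\ell')\theta}\,d\theta$ vanishes; since $k\mapsto k-m$ is injective, distinct $k$ give distinct $\ell$, so the whole family is pairwise orthogonal. One should also note completeness within $\mathcal{A}_{\nu,m}^2(\mathbb{D})$: in each fixed angular sector the admissible radial solution is unique up to a scalar, so $\{\phi_k^{\nu,m}\}_k$ exhausts the eigenspace.

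Finally, for part $(ii)$ I would reduce the norm to a one-dimensional integral. In polar coordinates, after $x=r^2$, one gets $\|\phi_k^{\nu,m}\|^2=\pi\int_0^1 x^{|m-k|}(1-x)^{2\nu-2m-2}\,[{}_2F_1(a,b;c;x)]^2\,dx$. The key observation is that the terminating ${}_2F_1(a,b;c;x)$ is, up to an explicit constant, a Jacobi polynomial in $1-2x$, so this is an instance of the Jacobi orthogonality relation on $[0,1]$ with weight $x^{|m-k|}(1-x)^{2\nu-2m-2}$, whose value is a ratio of Gamma functions; the denominator factor $2(\nu-m)-1$ in the stated norm is the tell-tale trace of this beta-type normalization. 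I expect the main obstacle to be precisely this evaluation: matching the Jacobi normalization constant and its orthogonality-norm to the exact product of Gamma functions in the statement, while keeping careful track of the two parameter branches (the sign of $m-k$) so that the single closed formula holds uniformly. A clean alternative is to invoke the standard moment formula for $\int_0^1 x^{\gamma-1}(1-x)^{\rho-1}\,{}_2F_1(a,b;c;x)\,dx$: since one factor is a polynomial, the product ${}_2F_1\cdot{}_2F_1$ expands into finitely many such moments, and the resulting finite sum telescopes via Gauss's theorem, sidestepping the Jacobi bookkeeping.
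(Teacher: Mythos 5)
The first thing to note is that the paper does not prove this proposition at all: the sentence immediately preceding it reads ``See \cite{MO,W}, for the following proposition'', so the statement is imported from the literature, and only the subsequent Corollary (the Jacobi--polynomial rewriting of $\phi_k^{\nu,m}$) receives an argument. Your proposal is therefore a genuinely different route --- an actual derivation --- and it is the standard, correct one. The conjugation $\psi=\mathfrak{Q}_\nu\Phi$ does convert $\widetilde H_\nu\Phi=\epsilon_m^\nu\Phi$ into an eigenvalue problem for the Maass Laplacian $(2.2)$; that operator is rotation invariant, so the sector ansatz $f(r)e^{i\ell\theta}$ is legitimate; the substitution $f(r)=r^{|\ell|}(1-r^2)^{-m}g(r^2)$, $x=r^2$ yields the Gauss equation with exactly the parameters of the statement; and, with the eigenvalue fixed at $\epsilon_m^\nu$, square integrability of $(1-x)^{-m}{}_2F_1(a,b;c;x)$ against $(1-x)^{2\nu-2}$ near $x=1$ holds precisely when the series terminates ($a$ a non-positive integer), which is what excludes the angular momenta $\ell=k-m<-m$ and, via integrability of the boundary exponent $2(\nu-m)-2>-1$, encodes the restriction $m\le\left[\nu-\frac12\right]$. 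Orthogonality across distinct $k$ from the angular integral, and exhaustion of the eigenspace from the one-dimensionality of each admissible sector, complete part $(i)$. What your route buys is a self-contained proof that shows where the hypotheses $2\nu>1$ and $m\le[\nu-\frac12]$ come from; what the paper's citation buys is brevity, and its Corollary's use of the ${}_2F_1$--Jacobi connection is exactly the rewriting your part-$(ii)$ strategy relies on.

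Two points need tightening. First, in part $(ii)$ the integral $\pi\int_0^1x^{|m-k|}(1-x)^{2\nu-2m-2}\bigl[{}_2F_1\bigr]^2dx$ is \emph{not} an instance of the Jacobi orthogonality relation: the Jacobi parameter is $\beta=2(\nu-m)-1$, while the weight carries the exponent $\beta-1$, an off-by-one mismatch. What is needed is the companion formula
\begin{equation*}
\int_{-1}^{1}(1-u)^{\alpha}(1+u)^{\beta-1}\bigl[P_n^{(\alpha,\beta)}(u)\bigr]^2\,du
=\frac{2^{\alpha+\beta}}{\beta}\,\frac{\Gamma(n+\alpha+1)\,\Gamma(n+\beta+1)}{n!\,\Gamma(n+\alpha+\beta+1)},
\end{equation*}
which is precisely where the factor $2(\nu-m)-1$ in the stated denominator comes from; combined with ${}_2F_1(-n,\cdot;1+\alpha;x)=\frac{n!}{(1+\alpha)_n}P_n^{(\alpha,\beta)}(1-2x)$ it reproduces the stated Gamma ratio on both branches $k>m$ and $k<m$. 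Your fallback --- expanding the square of the terminating ${}_2F_1$ into beta moments and summing by Gauss --- is the safer formulation and does land on the same answer. Second, a small point in the local analysis at the origin: for $\ell=0$ the second Frobenius solution (the logarithmic one) \emph{is} locally square integrable against $r\,dr$, so integrability alone does not discard it; you need elliptic regularity of eigenfunctions (smoothness at $z=0$) to select the regular solution in that sector. Neither issue invalidates the plan; both are needed to make it a proof.
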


\bigskip
\begin{corollary} 
	The functions $\left\{ \Phi _{k}^{\nu,m}\right\}$, $k=0,1,2,\ldots$ given by 
	\begin{equation*}
		\Phi_{k}^{\nu ,m} \left( z\right) :=\left( -1\right)^{k} 
		\left( \frac{2\left( \nu -m\right) -1}{\pi } \right)^{\frac{1}{2}} 
		\left( \frac{k!\Gamma \left( 2\left( \nu -m\right) + m\right) }{m!\Gamma \left( 2\left( \nu-m\right) +k\right) }\right)^{\frac{1}{2}}
	\end{equation*}
	\begin{equation*}
		\times \left( 1-\left\vert z\right\vert^{2}\right)^{-m}\overline{z}^{m-k}P_{k}^{\left( m-k,2\left( \nu -m\right) -1\right) }
		\left( 1-2z\overline{z}\right)
	\end{equation*}
	in terms of Jacobi polynomials constitute an orthonormal basis of $\mathcal{A}_{m}^{2,\nu }\left( \mathbb{D}\right)$
\end{corollary}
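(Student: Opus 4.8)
The plan is to obtain the Corollary directly from the Proposition by recognizing the terminating Gauss hypergeometric function inside $\phi_k^{\nu,m}$ as a Jacobi polynomial and then rescaling by the norm computed in part $(ii)$. The only genuine content is thus a change of special-function representation together with a normalization, so I would build the argument around the classical connection formula
\[
{}_2F_1(-n,\,n+\alpha+\beta+1;\,\alpha+1;\,t)=\frac{n!}{(\alpha+1)_n}\,P_n^{(\alpha,\beta)}(1-2t),
\]
valid for every nonnegative integer $n$, which turns a degree-$n$ polynomial in $t$ into a Jacobi polynomial evaluated at $1-2t$.

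First I would simplify the hypergeometric parameters in $\phi_k^{\nu,m}$. Splitting into the two cases according to the sign of $m-k$, one finds in the case $k\ge m$ (so $|m-k|=k-m$) that the upper parameters collapse to $-m$ and $2(\nu-m)+k$ while the lower parameter becomes $1+k-m$; using $|z|^{|m-k|}e^{-i(m-k)\arg z}=z^{\,k-m}$ this gives
\[
\phi_k^{\nu,m}(z)=z^{\,k-m}(1-|z|^2)^{-m}\,{}_2F_1\bigl(-m,\,2(\nu-m)+k;\,1+k-m;\,|z|^2\bigr).
\]
Applying the connection formula with $n=m$, $\alpha=k-m$, $\beta=2(\nu-m)-1$ rewrites the right-hand side as a multiple of $z^{\,k-m}P_m^{(k-m,\beta)}(1-2|z|^2)$. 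The case $k<m$ is handled symmetrically, with the roles of $z$ and $\overline z$ interchanged.

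The step I expect to be the main obstacle is passing from $P_m^{(k-m,\beta)}$ to the polynomial $P_k^{(m-k,\beta)}$ that actually appears in the statement, since the latter carries the \emph{negative} integer upper index $m-k$ when $k>m$. Here I would invoke the identity for Jacobi polynomials with a negative integer parameter,
\[
P_k^{(m-k,\beta)}(x)=\frac{m!\,\Gamma(k+\beta+1)}{k!\,\Gamma(m+\beta+1)}\Bigl(\frac{x-1}{2}\Bigr)^{k-m}P_m^{(k-m,\beta)}(x),
\]
evaluated at $x=1-2|z|^2$; combined with $\overline z^{\,m-k}|z|^{2(k-m)}=z^{\,k-m}$, this converts the factor $z^{\,k-m}P_m^{(k-m,\beta)}$ precisely into $\overline z^{\,m-k}P_k^{(m-k,\beta)}$ and produces an explicit proportionality constant
\[
c_k=(-1)^{k-m}(k-m)!\,\frac{\Gamma(2(\nu-m)+m)}{\Gamma(2(\nu-m)+k)}.
\]
Tracking the Pochhammer and Gamma factors carefully through this reduction is the only place where sign or index errors can creep in.

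Finally I would normalize. Writing $\phi_k^{\nu,m}=c_k\,\widetilde\Phi_k^{\nu,m}$ with $\widetilde\Phi_k^{\nu,m}(z)=(1-|z|^2)^{-m}\overline z^{\,m-k}P_k^{(m-k,2(\nu-m)-1)}(1-2|z|^2)$, part $(ii)$ of the Proposition gives $\|\widetilde\Phi_k^{\nu,m}\|^2=\|\phi_k^{\nu,m}\|^2/|c_k|^2$, and the factor $(\Gamma(1+k-m))^2$ cancels while the remaining Gamma terms telescope to
\[
\|\widetilde\Phi_k^{\nu,m}\|^2=\frac{\pi}{2(\nu-m)-1}\cdot\frac{m!\,\Gamma(2(\nu-m)+k)}{k!\,\Gamma(2(\nu-m)+m)}.
\]
Its reciprocal square root is exactly the normalization constant displayed in the Corollary, so $\Phi_k^{\nu,m}$ equals $\widetilde\Phi_k^{\nu,m}/\|\widetilde\Phi_k^{\nu,m}\|$ up to the unimodular factor $(-1)^k$, which does not affect orthonormality. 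Since $\{\phi_k^{\nu,m}\}_{k\ge0}$ is an orthogonal basis of $\mathcal A_{\nu,m}^{2}(\mathbb D)$ by part $(i)$ and each $\Phi_k^{\nu,m}$ is a nonzero scalar multiple of $\phi_k^{\nu,m}$ of unit norm, the family $\{\Phi_k^{\nu,m}\}_{k\ge0}$ is the associated orthonormal basis, completing the proof.
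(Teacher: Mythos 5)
Your proposal is correct and takes essentially the same route as the paper's proof: both rest on the $_{2}F_{1}$--Jacobi connection formula, the Szeg\H{o} identity for a negative integer upper parameter (the paper's citation \cite[p. 63]{S}), and the normalization supplied by part $(ii)$ of the Proposition. The only difference is cosmetic --- you apply the negative-index identity before normalizing while the paper normalizes first --- and your tracking of the constants $c_k$ is in fact more careful than the paper's, which silently absorbs the factor $m!/(1+k-m)_m$ into a sign $(-1)^{\min(m,k)}$.
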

\textbf{Proof. } Write the connection between the\textbf{\ }$_{2}F_{1}$-sum
and the\textbf{\ }Jacobi polynomial
$$ P_{k}^{\alpha ,\beta }(u)=\frac{( 1+\alpha)_k}{k!}.\ _{2}F_{1}( -k,1+\alpha +\beta +k,1+\alpha;\frac{1-u}{2})$$
then the functions
\begin{equation*}
\phi _{k}^{\nu ,m}\left( z\right) =\frac{\left( -1\right) ^{\min \left(
m,k\right) }}{\left( 1-\left\vert z\right\vert ^{2}\right) ^{m}}\left\vert
z\right\vert ^{\left\vert m-k\right\vert }e^{-i\left( m-k\right) \arg
z}P_{\min \left( m,k\right) }^{\left( \left\vert m-k\right\vert ,2\left( \nu
-m\right) -1\right) }\left( 1-2z\overline{z}\right)
\end{equation*}%
constitute an orthonormal basis of $\mathcal{A}_{\nu ,m}^{2}.$The norm
square of $\phi _{k}^{\nu ,m}$ in $L^{2,\nu }\left( \mathbb{D}\right) $ is
given by
\begin{equation*}
\left\Vert \phi _{k}^{\nu ,m}\right\Vert ^{2}=\frac{\pi }{\left( 2\left( \nu
-m\right) -1\right) }\frac{\left( m\vee k\right) !\Gamma \left( 2\left( \nu
-m\right) +m\wedge k \right) }{\left( m\wedge k\right) !\Gamma \left( 2\left(
\nu -m\right) +m\vee k\right) }.
\end{equation*}%
Here, $m\wedge k:=\min \left( m,k\right) $ and $m\vee k:=\max \left(m,k\right)$. Thus, the set of functions
\begin{equation*}
\Phi _{k}^{\nu ,m}:=\frac{\phi _{k}^{\nu ,m}}{\left\Vert \phi _{k}^{\nu,m}\right\Vert }, \quad k=0,1,2,...
\end{equation*}
is an orthonormal basis of $\mathcal{A}_{\nu ,m}^{2}\left( \mathbb{D}\right)$ and can be rewritten as
\begin{equation}
	\Phi _{k}^{\nu ,m}\left( z\right) = \left( -1\right)^{k} \left( \frac{2( \nu -m) -1}{\pi } \right)^{\frac{1}{2}}
	\left( \frac{k!\Gamma\left( 2( \nu -m) +m\right) }{m!\Gamma \left( 2( \nu-m) +k\right) }\right)^{\frac{1}{2}} \tag{$\star$}
\end{equation}
\begin{equation*}
	\times \left( 1-\left\vert z\right\vert ^{2}\right)^{-m} 
	\overline{z}^{m-k}P_{k}^{\left( m-k,2( \nu -m) -1\right) }\left( 1-2z\overline{z}\right)
\end{equation*}
by making appeal to the identity \cite[p. 63]{S} 
\begin{equation*}
\frac{\Gamma \left( m+1\right) }{\Gamma \left( m-s+1\right) }P_{m}^{\left(
-s,\alpha \right) }\left( u\right) =\frac{\Gamma \left( m+\alpha +1\right) }{%
\Gamma \left( m-s+\alpha +1\right) }\left( \frac{u-1}{2}\right)
^{s}P_{m-s}^{\left( s,\alpha \right) }\left( u\right) ,1\leq s\leq m
\end{equation*}\label{s}%

for $s=m-k,$ $t=1-2\left\vert z\right\vert ^{2}$ and $\alpha =2\left( \nu
-m\right) -1....\square $\\
\begin{corollary} \textit{The} $L^{2}-$\textit{eigenspace} $\mathcal{A}%
_{\nu ,0}^{2}\left( \mathbb{D}\right) $, \textit{corresponding to }$m=0$%
\textit{\ in }$\left( 3.1\right) $\textit{\ and associated to the bottom
energy }$\epsilon _{0}^{\nu }=0$\textit{\ in }$\left( 2.6\right) ,$\textit{\
reduces further to the weighted Bergman space consisting of holomorphic
functions }$\phi $\textit{\ }: $\mathbb{D\rightarrow C}$ \textit{such that}
\begin{equation*}
\int\limits_{\mathbb{D}}\left\vert \phi \left( z\right) \right\vert
^{2}\left( 1-\left\vert z\right\vert ^{2}\right) ^{2\nu -2}d\mu \left(
z\right) <+\infty .
\end{equation*}%
\end{corollary}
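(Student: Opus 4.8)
The plan is to deduce the statement directly from the explicit orthogonal basis of Proposition~1 by specializing the level index $m$ to $0$, rather than re-analyzing the differential operator $\widetilde{H}_{\nu}$ from scratch. The key observation is that at the bottom level the terminating $_2F_1$ appearing in Proposition~1 degenerates to a constant, so the basis functions collapse to the monomials $z^{k}$, and these are precisely the standard orthogonal system of the weighted Bergman space.

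First I would set $m=0$ in the formula for $\phi_{k}^{\nu ,m}$. For $k\geq 0$ one has $|m-k|=k$, so the first parameter of the hypergeometric factor is $-m+\tfrac{m-k+|m-k|}{2}=0$; since $_2F_1(0,b,c;x)\equiv 1$, that factor disappears. The prefactor $(1-|z|^{2})^{-m}$ equals $1$, while $|z|^{|m-k|}e^{-i(m-k)\arg z}=|z|^{k}e^{ik\arg z}=z^{k}$. Hence $\phi_{k}^{\nu ,0}(z)=z^{k}$, which is holomorphic on $\mathbb{D}$. As a consistency check I would specialize Proposition~1(ii) at $m=0$ and compare it with the direct evaluation
\[
\|z^{k}\|_{L^{2,\nu}}^{2}=\int_{\mathbb{D}}|z|^{2k}(1-|z|^{2})^{2\nu-2}\,d\mu(z)=\pi\,\frac{k!\,\Gamma(2\nu-1)}{\Gamma(k+2\nu)},
\]
obtained by passing to polar coordinates and using the Beta integral; both expressions agree, confirming the identification $\phi_{k}^{\nu ,0}=z^{k}$.

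I would then conclude by a basis-comparison argument inside $L^{2,\nu}(\mathbb{D})$. By Proposition~1(i) the family $\{\phi_{k}^{\nu ,0}\}_{k\geq 0}=\{z^{k}\}_{k\geq 0}$ is an orthogonal basis of $\mathcal{A}_{\nu ,0}^{2}(\mathbb{D})$, so $\mathcal{A}_{\nu ,0}^{2}(\mathbb{D})=\overline{\mathrm{span}}\{z^{k}:k\geq 0\}$. On the other hand the monomials are pairwise orthogonal for the radial weight $(1-|z|^{2})^{2\nu-2}$ and are complete in the weighted Bergman space $L^{2,\nu}_{a}(\mathbb{D})$ of holomorphic elements of $L^{2,\nu}(\mathbb{D})$, because every holomorphic $L^{2,\nu}$-function is the $L^{2,\nu}$-limit of the partial sums of its Taylor series. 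Thus $\{z^{k}\}$ is an orthogonal basis of $L^{2,\nu}_{a}(\mathbb{D})$ as well, and two closed subspaces of one Hilbert space that share an orthogonal basis coincide. This yields $\mathcal{A}_{\nu ,0}^{2}(\mathbb{D})=L^{2,\nu}_{a}(\mathbb{D})$, which is the asserted weighted Bergman space.

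The step deserving the most care is the completeness of the monomials in $L^{2,\nu}_{a}(\mathbb{D})$, i.e. that $\overline{\mathrm{span}}\{z^{k}\}$ recaptures exactly the holomorphic $L^{2,\nu}$-functions and no others; this is where one must invoke $L^{2}$-convergence of Taylor series for the radial weight. A conceptually cleaner but computationally heavier alternative sidesteps this point: unwinding the eigenvalue equation $\widetilde{H}_{\nu}\Phi=\epsilon_{0}^{\nu}\Phi$ through the intertwining map $\mathfrak{Q}_{\nu}$ reduces it to a first-order constraint, namely $\partial_{\overline{z}}\Phi=0$, whose solution space is exactly the holomorphic functions, after which one only checks the $L^{2,\nu}$-integrability condition. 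I would expect that reduction, which amounts to factoring $\widetilde{H}_{\nu}$ at its bottom level into a first-order operator times its adjoint, to be the main obstacle along the PDE route, which is why I favor the direct specialization above.
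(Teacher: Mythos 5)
Your proof is correct. The paper itself offers no argument for this corollary: it is stated as an immediate consequence of the preceding basis description, and the specialization you perform is exactly the implicit justification. Indeed, setting $m=0$ in the Jacobi-polynomial form of the basis, one has $\min(0,k)=0$ and $P_{0}^{(k,2(\nu-m)-1)}\equiv 1$, so $\phi_{k}^{\nu,0}(z)=z^{k}$, matching your computation via the vanishing first parameter of the terminating $_{2}F_{1}$. Your consistency check of the norm is also right: Proposition 1(ii) at $m=0$ gives $\pi\,k!\,\Gamma(2\nu)/\bigl((2\nu-1)\Gamma(2\nu+k)\bigr)$, which equals your Beta-integral value $\pi\,k!\,\Gamma(2\nu-1)/\Gamma(2\nu+k)$ after using $\Gamma(2\nu)=(2\nu-1)\Gamma(2\nu-1)$. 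What your write-up supplies beyond what the paper takes for granted is the one genuinely analytic point: completeness of the monomials in the holomorphic subspace $L^{2,\nu}_{a}(\mathbb{D})$, i.e. that the Taylor partial sums of a holomorphic $L^{2,\nu}$-function converge in the weighted norm. This is standard for radial weights (integrate $|f|^{2}$ over circles, apply Parseval and monotone convergence to get $\|f\|^{2}=\sum_{k}|a_{k}|^{2}\|z^{k}\|^{2}$, then apply the same identity to the tails), and once it is in place your conclusion that the two closed subspaces share an orthogonal basis, hence coincide, is airtight. The PDE alternative you sketch (reducing $\widetilde{H}_{\nu}\Phi=0$ to $\partial_{\overline{z}}\Phi=0$ by factorization) is not needed and is correctly set aside.
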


\section{Computation of the singular values $\lambda_k$ }
Elements of this basis are given in terms of Jacobi polynomials as\\ 
\begin{equation}
\phi _{k}^{\nu ,m}\left( z\right) =\frac{\left( -1\right) ^{\min \left(
m,k\right) }}{\left( 1-\left| z\right| ^{2}\right) ^{m}}\left| z\right|
^{\left| m-k\right| }e^{-i\left( m-k\right) \arg z}P_{\min \left( m,k\right)
}^{\left( \left| m-k\right| ,2\left( \nu -m\right) -1\right) }\left( 1-2z%
\overline{z}\right)  \tag{5.1}
\end{equation}
The norm square of $\phi _{k}^{\nu ,m}$ in $L^{2,\nu }\left( \mathbb{D}%
\right) $ is given by
\begin{equation}
\rho _{k}^{\nu ,m}=\frac{\pi }{\left( 2\left( \nu -m\right) -1\right) }\frac{%
\left( m\vee k\right) !\Gamma \left( 2\left( \nu -m\right) +m\wedge k \right)
}{\left( m\wedge k\right) !\Gamma \left( 2\left( \nu -m\right) +m\vee
k\right) }.  \tag{5.2}
\end{equation}
Here, $m\wedge k:=\min \left( m,k\right) $ and $m\vee k:=\max \left(
m,k\right) .$ Let us introduce the notationThe set of functions
\begin{equation}
\gamma _{k}^{\nu ,m}:=\frac{\left( -1\right) ^{m\wedge k}}{\sqrt{\rho
_{k}^{\nu ,m}}},k=0,1,2,...  \tag{5.3}
\end{equation}
So that we consider the elements
\begin{equation}
\Phi _{k}^{\nu ,m}\left( z\right) :=\gamma _{k}^{\nu ,m}\frac{1}{\left( 1-z%
\overline{z}\right) ^{m}}\left| z\right| ^{\left| m-k\right| }e^{-i\left(
m-k\right) \arg z}P_{\min \left( m,k\right) }^{\left( \left| m-k\right|
,2\left( \nu -m\right) -1\right) }\left( 1-2z\overline{z}\right)  \tag{4}
\end{equation}
\subsection{The action $\mathcal L_{\nu}$}
\begin{lemma}
We set $z=\rho e^{it}$, and $I=-\int_{0}^{2\pi}{e^{i(k-m)\theta}log\left(|z-re^{i\theta}|\right)\frac{d\theta}{2\pi}}$, we have
\begin{equation}
	\begin{cases}
    I=-\log\left(\rho\wedge r\right) &	 k=m\\
		I=\frac{e^{i(k-m)t}}{2|m-k|}\left(\left(\frac{r}{\rho}\right)^{m-k}\wedge \left(\frac{r}{\rho}\right)^{m-k}\right) & k\neq m
  \end{cases}
\end{equation}	
\end{lemma}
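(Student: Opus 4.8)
The plan is to read $I$ as an angular Fourier coefficient of the logarithmic kernel and to reduce everything to the orthogonality relation $\int_{0}^{2\pi}e^{ip\theta}\,\frac{d\theta}{2\pi}=\delta_{p,0}$. With $z=\rho e^{it}$ and integration variable $\xi=re^{i\theta}$, the integral $I$ is (up to the sign in front) the coefficient of $e^{-i(k-m)\theta}$ in the Fourier series in $\theta$ of the function $\theta\mapsto\log|z-\xi|$. So the whole lemma follows once that Fourier series is known explicitly, and the two cases $k=m$ and $k\neq m$ correspond to extracting the constant mode versus a single oscillatory mode.

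First I would establish the kernel expansion. Factor out the larger of the two moduli and use $\log|1-w|=\mathrm{Re}\,\log(1-w)$ together with $\log(1-w)=-\sum_{n\ge 1}w^{n}/n$ for $|w|<1$. Taking $w=(r/\rho)e^{i(\theta-t)}$ gives, for $r<\rho$,
$$\log|z-\xi|=\log\rho-\frac12\sum_{n=1}^{\infty}\frac1n\Big(\frac r\rho\Big)^{n}\big(e^{in(\theta-t)}+e^{-in(\theta-t)}\big),$$
and the symmetric identity with $r$ and $\rho$ interchanged when $r>\rho$. In compact form: the zeroth angular mode is $\log(\rho\vee r)$, and the $n$-th modes carry the coefficient $-\tfrac1{2n}\big((r\wedge\rho)/(r\vee\rho)\big)^{n}e^{\mp int}$.

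Next I would insert this series into $I$ and integrate term by term. For $k=m$ the weight $e^{i(k-m)\theta}\equiv 1$, so orthogonality annihilates every oscillatory mode and only the constant mode survives; hence $I=-\log(\rho\vee r)$, i.e. minus the logarithm of the \emph{larger} of the two radii (this is just the mean–value property of the subharmonic function $\log|z-\xi|$ over the circle $|\xi|=r$). For $k\neq m$, orthogonality selects exactly the single index $n=|k-m|$ — the term whose phase $e^{\pm in\theta}$ cancels $e^{i(k-m)\theta}$ — and the leftover $t$-dependence collapses to the factor $e^{i(k-m)t}$. Collecting the surviving coefficient yields
$$I=\frac{e^{i(k-m)t}}{2|m-k|}\Big(\frac{r\wedge\rho}{r\vee\rho}\Big)^{|k-m|},$$
which is the asserted expression once the ratio of radii is written in the $\wedge/\vee$ notation of the statement.

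The genuinely delicate points are analytic bookkeeping rather than conceptual. The main obstacle is justifying the term-by-term integration: the series converges uniformly in $\theta$ whenever $r\neq\rho$, so the interchange of sum and integral is immediate there, while the single circle $r=\rho$ has measure zero and the logarithmic singularity is integrable, so it is harmless in the radial integrations for which this lemma is later used. A secondary point is simply to keep the regimes $r<\rho$ and $r>\rho$ consistent so that the answer packages correctly into $\wedge/\vee$ form; in particular one should record that the $k=m$ value is the logarithm of the larger radius $\rho\vee r$, and that for $k\neq m$ the surviving power is $\big((r\wedge\rho)/(r\vee\rho)\big)^{|k-m|}$.
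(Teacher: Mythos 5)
Your proof is correct, but it takes a genuinely different route from the paper's. The paper does not expand the kernel at all: it invokes Arazy--Khavinson \cite{AK} for the cases $k\geq m$, and its only new content is the case $k<m$, which it reduces to the cited case through the reflection $\theta\mapsto-\theta$ (together with periodicity and the invariance of $|\rho e^{it}-re^{i\theta}|$ under complex conjugation), so the whole argument is a short symmetry reduction resting on an external reference. You instead prove everything from scratch: factor out the larger modulus, expand $\log|1-w|=-\tfrac{1}{2}\sum_{n\ge 1}(w^{n}+\overline{w}^{n})/n$, and pick out the single surviving Fourier mode by orthogonality. Your approach buys three things: it is self-contained; it treats $k=m$, $k>m$ and $k<m$ uniformly; and it pins down the $\wedge/\vee$ bookkeeping that the printed lemma garbles. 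Indeed the constant mode is $\log(\rho\vee r)$, the logarithm of the \emph{larger} radius, so $I=-\log(\rho\vee r)$ for $k=m$ (the displayed $-\log(\rho\wedge r)$ is a misprint, and the paper itself silently switches to $\log(\rho^{2}\vee t)$ when it applies the lemma in Section 5), while for $k\neq m$ the surviving coefficient is $\bigl((r\wedge\rho)/(r\vee\rho)\bigr)^{|k-m|}/(2|k-m|)$, which repairs the statement's doubled expression $\left(\frac{r}{\rho}\right)^{m-k}\wedge\left(\frac{r}{\rho}\right)^{m-k}$. What the paper's route buys is brevity: granting \cite{AK}, only the symmetry observation is needed, whereas you must justify a term-by-term integration. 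Your handling of that point (uniform convergence of the series in $\theta$ for fixed $r\neq\rho$, the exceptional circle $r=\rho$ being negligible in the subsequent radial integrals where the lemma is used) is adequate.
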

\begin{proof}[\textbf{Proof}] By \cite{AK}, it remain to prove that this lemma for $k<m$.\\
We have 
\begin{equation*}
\int_{0}^{2\pi}{e^{i(k-m)\theta}log\left(\left|\rho e^{it}-r e^{i\theta}\right|\right)d\theta}%
=-\int_{0}^{2\pi}{e^{i(m-k)(-\theta)}log\left(\left|r e^{i(-t)}-\rho e^{i(-\theta)}\right|\right)d(-\theta)}%
\end{equation*}
The function $\theta\rightarrow e^{i(m-k)(-\theta)}log\left(\left|r e^{i(-t)}-\rho e^{i(-\theta)}\right|\right)$ is a periodic mapping with the period equal $2\pi$, then 
$$\int_{0}^{2\pi}{e^{i(k-m)\theta}log\left(\left|\rho e^{it}-r e^{i\theta}\right|\right)d\theta}$$
$$=-\int_{0}^{2\pi}{e^{i(m-k)(-\theta)}log\left(\left|r e^{i(-t)}-\rho e^{i(-\theta)}\right|\right)d(-\theta)}$$
$$=\frac{e^{i(k-m)t}}{2(m-k)}\times \left(\left(\frac{r}{\rho}\right)^{m-k}\wedge\left(\frac{r}{\rho}\right)^{m-k}\right)$$
\end{proof}

\begin{lemma}
For all $\lambda\in\partial D$. $\mathcal L_{\nu}$ commutes with the rotations $R_{\lambda}$, where
$$\left(R_{\lambda}f\right)(z)=f\left(\lambda z\right)$$
\end{lemma}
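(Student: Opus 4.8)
The plan is to establish the commutation $\mathcal{L}_{\nu}R_{\lambda}=R_{\lambda}\mathcal{L}_{\nu}$ directly from the integral representation $(1.4)$, the key structural fact being that the kernel $K(z,\xi):=\log(1/|\xi-z|)(1-\xi\overline{\xi})^{2\nu-2}$ is jointly invariant under a simultaneous rotation of both arguments, since $|\lambda|=1$ on $\partial\mathbb{D}$. First I would write out
\begin{equation*}
\mathcal{L}_{\nu}[R_{\lambda}f](z)=\int_{\mathbb{D}}f(\lambda\xi)\log\left(\frac{1}{|\xi-z|}\right)(1-\xi\overline{\xi})^{2\nu-2}\,d\mu(\xi)
\end{equation*}
and then substitute $\eta=\lambda\xi$, equivalently $\xi=\overline{\lambda}\eta$ (using $\lambda^{-1}=\overline{\lambda}$). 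Since rotations preserve Lebesgue measure and map $\mathbb{D}$ onto itself, both the domain of integration and the factor $d\mu$ are unchanged.

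The heart of the verification is to transport the weight and the logarithmic kernel correctly. For the weight, $|\lambda|=1$ gives $\xi\overline{\xi}=|\overline{\lambda}\eta|^{2}=\eta\overline{\eta}$, so that $(1-\xi\overline{\xi})^{2\nu-2}=(1-\eta\overline{\eta})^{2\nu-2}$. For the kernel I would factor
\begin{equation*}
\xi-z=\overline{\lambda}\eta-z=\overline{\lambda}(\eta-\lambda z),
\end{equation*}
which is legitimate because $\overline{\lambda}\lambda=1$, and hence $|\xi-z|=|\eta-\lambda z|$. Inserting these two identities turns the integral into
\begin{equation*}
\int_{\mathbb{D}}f(\eta)\log\left(\frac{1}{|\eta-\lambda z|}\right)(1-\eta\overline{\eta})^{2\nu-2}\,d\mu(\eta)=\mathcal{L}_{\nu}[f](\lambda z)=(R_{\lambda}\mathcal{L}_{\nu}[f])(z),
\end{equation*}
which is the desired identity for every $f\in L^{2,\nu}(\mathbb{D})$.

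I do not expect a genuine obstacle: the argument reduces to a single measure-preserving change of variables. The only point demanding mild care is the kernel factorization, where the phase must be absorbed so that $z$ is replaced by $\lambda z$ inside the logarithm rather than by $\overline{\lambda}z$; this is precisely the content of the identity $\xi-z=\overline{\lambda}(\eta-\lambda z)$. Equivalently, one may phrase the entire computation as the joint invariance $K(\lambda z,\lambda\xi)=K(z,\xi)$ together with $d\mu(\lambda\xi)=d\mu(\xi)$, which is the standard criterion for an integral operator to commute with rotations.
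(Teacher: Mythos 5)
Your proof is correct: the change of variables $\eta=\lambda\xi$ preserves $\mathbb{D}$ and $d\mu$, the radial weight gives $(1-\xi\overline{\xi})^{2\nu-2}=(1-\eta\overline{\eta})^{2\nu-2}$, and the factorization $\xi-z=\overline{\lambda}(\eta-\lambda z)$ with $|\overline{\lambda}|=1$ transports the logarithmic kernel exactly as needed, yielding $\mathcal{L}_{\nu}R_{\lambda}=R_{\lambda}\mathcal{L}_{\nu}$ on all of $L^{2,\nu}(\mathbb{D})$. This is, however, a genuinely different route from the paper's. The paper's entire proof is the one-line observation that $(R_{\lambda}\phi^{\nu,m}_k)(z)=\lambda^{k-m}\phi^{\nu,m}_k(z)$ for $k\neq m$, i.e., that the basis functions of $\mathcal{A}^{2}_{\nu,m}(\mathbb{D})$ are eigenfunctions of the rotation operator. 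That observation by itself does not establish the commutation: it says nothing about how $\mathcal{L}_{\nu}$ interacts with $R_{\lambda}$, and at best, when combined with the later computation showing that $\mathcal{L}_{\nu}\phi^{\nu,m}_k$ carries the same angular factor $e^{i(k-m)t}$, it would give the commutation relation only on the span of the $\phi^{\nu,m}_k$, not on the whole space $L^{2,\nu}(\mathbb{D})$ where $\mathcal{L}_{\nu}$ is defined. Your kernel-invariance argument is the standard and complete proof; it requires no information about the basis and no appeal to the subsequent computation of $\mathcal{L}_{\nu}\phi^{\nu,m}_k$, and it is exactly what the following corollary (orthogonality of the $\mathcal{L}_{\nu}(\phi^{\nu,m}_k)$) needs, since that corollary uses the commutation together with the eigenfunction property; the paper's proof supplies the second ingredient but not the lemma itself.
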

\begin{proof}[\textbf{Proof}]
We observe that 
$$\left(R_{\lambda}\phi^{\nu,m}_k\right)(z)=\lambda^{k-m}\phi^{\nu,m}_k (z),\ \forall k\neq m$$
\end{proof}

\begin{corollary}
$\left\{\mathcal L_{\nu}\left(\phi^{\nu,m}_k\right)\right\}^{\infty}_{k=0}$ are orthonormal in $L^{2,\nu}\left(\mathbb{D}\right)$
\end{corollary}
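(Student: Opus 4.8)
The plan is to exploit rotational symmetry rather than to evaluate the transform head-on. First I would record that the measure $(1-|z|^{2})^{2\nu-2}\,d\mu(z)$ defining $L^{2,\nu}(\mathbb{D})$ is invariant under $z\mapsto\lambda z$ for $\lambda\in\partial\mathbb{D}$, so that each rotation $R_{\lambda}$ is \emph{unitary} on $L^{2,\nu}(\mathbb{D})$. Next, reading off the explicit form $(5.1)$ of $\phi_{k}^{\nu,m}$—in which the entire angular dependence sits in the single factor $e^{-i(m-k)\arg z}$—I would note that each basis element is an eigenfunction of $R_{\lambda}$,
\[
R_{\lambda}\phi_{k}^{\nu,m}=\lambda^{k-m}\,\phi_{k}^{\nu,m},\qquad k=0,1,2,\dots,
\]
which is exactly the identity invoked in the proof of the commutation lemma (and which holds for $k=m$ as well, with eigenvalue $\lambda^{0}=1$, since then the angular factor is trivial).

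I would then transfer this eigenfunction property to the images by combining it with the commutation relation $R_{\lambda}\mathcal{L}_{\nu}=\mathcal{L}_{\nu}R_{\lambda}$ of the preceding lemma:
\[
R_{\lambda}\bigl(\mathcal{L}_{\nu}\phi_{k}^{\nu,m}\bigr)=\mathcal{L}_{\nu}\bigl(R_{\lambda}\phi_{k}^{\nu,m}\bigr)=\lambda^{k-m}\,\mathcal{L}_{\nu}\phi_{k}^{\nu,m},
\]
so that $\mathcal{L}_{\nu}\phi_{k}^{\nu,m}$ lies in the $\lambda^{k-m}$-eigenspace of $R_{\lambda}$. Using unitarity of $R_{\lambda}$ together with $|\lambda|=1$ (whence $\overline{\lambda^{j-m}}=\lambda^{m-j}$), for any indices $k\neq j$ I would compute
\[
\bigl\langle \mathcal{L}_{\nu}\phi_{k}^{\nu,m},\,\mathcal{L}_{\nu}\phi_{j}^{\nu,m}\bigr\rangle=\bigl\langle R_{\lambda}\mathcal{L}_{\nu}\phi_{k}^{\nu,m},\,R_{\lambda}\mathcal{L}_{\nu}\phi_{j}^{\nu,m}\bigr\rangle=\lambda^{\,k-j}\,\bigl\langle \mathcal{L}_{\nu}\phi_{k}^{\nu,m},\,\mathcal{L}_{\nu}\phi_{j}^{\nu,m}\bigr\rangle.
\]
Since this holds for \emph{every} $\lambda\in\partial\mathbb{D}$ and $\lambda^{\,k-j}\not\equiv 1$ when $k\neq j$, the inner product is forced to vanish, which gives the orthogonality of the family $\{\mathcal{L}_{\nu}\phi_{k}^{\nu,m}\}_{k\ge 0}$.

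The symmetry argument above is short and self-contained; the genuine obstacle is hidden in the word \emph{orthonormal}, i.e.\ in the normalization $\lVert\mathcal{L}_{\nu}\phi_{k}^{\nu,m}\rVert=1$. To pin down this norm one cannot avoid evaluating the transform explicitly. Here I would use the angular-integral lemma above to collapse the $\theta$-integration in $\mathcal{L}_{\nu}$ onto the radial kernels $-\log(\rho\wedge r)$ (for $k=m$) and $\tfrac{1}{2|m-k|}(r/\rho)^{|m-k|}$-type terms (for $k\neq m$), after which $\mathcal{L}_{\nu}\phi_{k}^{\nu,m}$ recovers the separated form $e^{-i(m-k)\arg z}\times(\text{radial})$. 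The surviving radial integral, taken against the Jacobi-polynomial profile of $(5.1)$, is precisely the computation carried out in Section~5 and is what produces the constants $J_{1},J_{2},J_{3}$ and the $k=m$ expression in the main statement. I would therefore establish orthogonality at this point and defer the norm evaluation to the singular-value computation, understanding ``orthonormal'' up to that explicit normalizing factor.
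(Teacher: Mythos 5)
Your proof is correct and follows essentially the same route as the paper's: unitarity of $R_{\lambda}$ on $L^{2,\nu}(\mathbb{D})$, the commutation $R_{\lambda}\mathcal{L}_{\nu}=\mathcal{L}_{\nu}R_{\lambda}$, the eigenfunction identity $R_{\lambda}\phi_{k}^{\nu,m}=\lambda^{k-m}\phi_{k}^{\nu,m}$, and the arbitrariness of $\lambda\in\partial\mathbb{D}$ forcing the inner products to vanish for $k\neq j$. Your closing caveat is also well taken: this symmetry argument (the paper's included) only yields \emph{orthogonality}, since the norms $\lVert\mathcal{L}_{\nu}\phi_{k}^{\nu,m}\rVert$ are precisely the singular values $\lambda_{k}$ computed in Section~5 and are not equal to $1$, so the word ``orthonormal'' in the statement is an overstatement that your deferral to the norm computation handles honestly.
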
 
\begin{proof}[\textbf{Proof}]
As $R_{\lambda}$ is an isometry of $L^{2,\nu}(\mathbb{D})$,
$$\left(\mathcal L_{\nu}\left(\phi^{\nu,m}_k\right),\mathcal L_{\nu}\left(\phi^{\nu,m}_j\right)\right)$$
$$=\left(R_{\lambda}\mathcal L_{\nu}\left(\phi^{\nu,m}_k\right),R_{\lambda}\mathcal L_{\nu}\left(\phi^{\nu,m}_j\right)\right)$$
$$=\left(\mathcal L_{\nu}R_{\lambda}\left(\phi^{\nu,m}_k\right),\mathcal L_{\nu}R_{\lambda}\left(\phi^{\nu,m}_j\right)\right)$$
$$=\overline{\lambda^{j-k}}\left(\mathcal L_{\nu}\left(\phi^{\nu,m}_k\right),\mathcal L_{\nu}\left(\phi^{\nu,m}_j\right)\right),\ if\ m>k$$
or
$$=\lambda^{k-j}\left(\mathcal L_{\nu}\left(\phi^{\nu,m}_k\right),\mathcal L_{\nu}\left(\phi^{\nu,m}_j\right)\right),\ if\ m<k$$
For all $\lambda\in\partial D$, since $\lambda\neq 0$, we have
$$\left(\mathcal L_{\nu}\left(\phi^{\nu,m}_k\right),\mathcal L_{\nu}\left(\phi^{\nu,m}_j\right)\right)=0\ if\ j\neq k$$
\end{proof}


\begin{proposition}
The action of the operator $\mathcal L$ on a basis element $\phi^{\nu,m}_k$, is of the form:\\
If $k=m$, We put $z=\rho e^{i\theta}$ then

\begin{equation*}
\mathcal L_{\nu}(\phi^{\nu,m}_k)(z) = \beta\left(\rho\right)\;
 _3F_2\left(\begin{array}{cc} -m+1,2\nu-m,2\nu-m+1\\ 2(\nu-m), 2\nu-m+2\end{array}\mid 1-\rho^2\right)
\end{equation*}
with 
$\displaystyle \beta\left(\rho\right) = \frac{\alpha^{\nu,m}_k}{2(2\nu-m+1)}\sqrt{\frac{2(\nu-m)-1}{\pi}}(1-\rho^2)^{2\nu-m-1}$ . \\
If $k\neq m$ then
\begin{equation*}
\mathcal L_{\nu}(\phi^{\nu,m}_k)(z)=\frac{\pi\gamma^{\nu,m}_k e^{i(k-m)t}}{2(k-m)}\left(I_3+I_4\right)
\end{equation*}
where 
$$
I_3=\frac{(1+k-m)_m}{m!(k-m+1)}\rho^{k-m+2}(1-\rho^2)^{2\nu-m-1}\ _2F_1\left(\begin{array}{cc}-m+1,2(\nu-m)+k\\ 2+k-m\end{array}\mid \rho^2\right)
$$
and 
$$
I_4=\frac{\alpha^{\nu,m}_k}{2\nu-m-1}(1-\rho^2)^{2\nu-m-1}\ _2F_1\left(\begin{array}{cc}-m+1,2\nu-m-1\\ 2(\nu-m),\end{array}\mid \rho^2\right)
$$
\end{proposition}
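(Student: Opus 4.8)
The plan is to compute $\mathcal{L}_\nu[\phi_k^{\nu,m}]$ directly from the integral definition (1.4), passing to polar coordinates and separating the angular from the radial integration. Writing $\xi=re^{i\theta}$, $z=\rho e^{it}$, inserting the explicit form (5.1) of $\phi_k^{\nu,m}$ together with $d\mu(\xi)=r\,dr\,d\theta$, the transform factorizes as
$$\mathcal{L}_\nu[\phi_k^{\nu,m}](z) = \gamma_k^{\nu,m}\int_0^1 (1-r^2)^{2\nu-m-2}\, r^{|m-k|+1}\, P_{\min(m,k)}^{(|m-k|,2(\nu-m)-1)}(1-2r^2)\,\Bigl[\int_0^{2\pi} e^{-i(m-k)\theta}\log\tfrac{1}{|\xi-z|}\,d\theta\Bigr]dr.$$
First I would dispose of the inner angular integral, which is exactly $2\pi$ times the quantity $I$ of Lemma 1. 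For $k=m$ this produces $-\log(\min(\rho,r))$, while for $k\neq m$ it produces $\tfrac{e^{i(k-m)t}}{2|m-k|}(\min(r,\rho)/\max(r,\rho))^{|m-k|}$, which carries the full angular dependence $e^{i(k-m)t}$ and thereby explains the prefactors appearing in both cases of the statement.

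The decisive structural feature is the $\min/\max$ ratio (resp.\ the $\min$ inside the logarithm) emerging from Lemma 1: it forces me to split the remaining radial integral at $r=\rho$ into a piece over $[0,\rho]$ and a piece over $[\rho,1]$. For $k\neq m$ the inner region $r\le\rho$ contributes a factor $\rho^{-|m-k|}$ together with $\int_0^\rho$, while the outer region $r\ge\rho$ contributes $\rho^{|m-k|}$ together with $\int_\rho^1$; these two pieces are precisely $I_3$ and $I_4$. To evaluate each I would substitute $s=r^2$ (inner) and $s=1-r^2$ (outer), expand the Jacobi polynomial through its terminating $_2F_1$ representation (the one already recorded in the proof of Corollary 1), and integrate term by term. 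Each term is an incomplete Beta integral $\int_0^{\rho^2} s^{a}(1-s)^{b}\,ds$ (resp.\ $\int_0^{1-\rho^2}$), which is itself a $_2F_1$; resumming over the polynomial index collapses the resulting double sum into the single $_2F_1$ in $\rho^2$ displayed in $I_3$ and $I_4$, the factors $(1-\rho^2)^{2\nu-m-1}$ and the constants $(1+k-m)_m/(m!(k-m+1))$ and $\alpha_k^{\nu,m}/(2\nu-m-1)$ arising from the endpoint evaluations at $s=0$ and $s=1$.

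For the diagonal case $k=m$ the same splitting applies, but the radial integrand now carries $-\log(\min(r,\rho))$: over $[0,\rho]$ one integrates against $-\log r$, and over $[\rho,1]$ against the constant $-\log\rho$. Expanding $P_m^{(0,2(\nu-m)-1)}(1-2r^2)$ and integrating term by term, the $\log r$ factor converts each Beta integral into its parameter derivative, which introduces one extra summation index relative to the off-diagonal case; this is exactly what upgrades the answer from a $_2F_1$ to the $_3F_2\bigl(-m+1,2\nu-m,2\nu-m+1;2(\nu-m),2\nu-m+2\mid 1-\rho^2\bigr)$ of the statement, with $\beta(\rho)$ collecting the normalization $\gamma_m^{\nu,m}$, the weight power $(1-\rho^2)^{2\nu-m-1}$, and the constant $\alpha_m^{\nu,m}$.

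I expect the main obstacle to be the resummation of these double series into closed hypergeometric form: matching the lower parameters and the Pochhammer/Gamma prefactors after the endpoint evaluations, and keeping the $\rho^{\pm|m-k|}$ factors and the sign of $k-m$ correctly bookkept so that $|m-k|$ and $k-m$ reconcile in the final prefactor, is where errors are most likely to enter. The logarithmic (diagonal) case is the more delicate of the two, since controlling the parameter-derivative of the incomplete Beta integral and recognizing the resulting series as a balanced $_3F_2$ demands care; once the term-by-term integration and the incomplete-Beta-to-$_2F_1$ identification are in place, the off-diagonal case is comparatively routine.
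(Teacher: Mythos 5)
Your skeleton (polar coordinates, the angular lemma, splitting the radial integral at $r=\rho$, and the $\rho^{\mp|m-k|}$ bookkeeping that produces $I_3$ and $I_4$) is the same as the paper's. But your diagonal case $k=m$ contains a fatal error inherited from the misprint in Lemma 1: the angular mean of $\log|z-re^{i\theta}|$ equals $\log\max(\rho,r)$, \emph{not} $\log\min(\rho,r)$ (for $r<\rho$ the function $\xi\mapsto\log|z-\xi|$ is harmonic on $|\xi|\le r$, so the mean is its value at the center, $\log\rho$; for $r>\rho$ Jensen's formula gives $\log r$). The paper's own proof uses the correct version, $\log(\rho^2\vee t)$: the \emph{constant} $\log\rho^2$ sits on the inner interval $[0,\rho^2]$ and the \emph{varying} $\log t$ on the outer one --- the opposite of your assignment. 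This placement is what makes the computation close: the outer piece is integrated by parts, with antiderivative supplied by the table formula $\int x^{c-1}(1-x)^{b-c-1}\,{}_2F_1(a,b;c;x)\,dx=\frac{1}{c}x^{c}(1-x)^{b-c}\,{}_2F_1(a+1,b;c+1;x)$, and the resulting boundary term at $t=\rho^2$ cancels the inner constant-log piece \emph{exactly}, leaving a log-free integral. With your (min) placement there is nothing for the outer constant-log term to cancel against, so the method would output a formula still containing $\log\rho$, contradicting the stated $_3F_2$ expression.

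Second, your account of where the $_3F_2$ comes from is not correct: differentiating a Beta integral with respect to a parameter produces digamma factors, not an extra hypergeometric index with clean Gamma coefficients. In the paper the $_3F_2$ appears only \emph{after} the logarithms have cancelled: the leftover integral $\int_{\rho^2}^{1}(1-t)^{2\nu-m-1}\,{}_2F_1(-m+1,2\nu-m;2;t)\,dt$ is first rewritten by Gauss's connection formula sending $t\mapsto 1-t$ (where $1/\Gamma(1-m)=0$ kills one of the two terms), and then evaluated with $\int x^{\alpha-1}\,{}_2F_1(a,b;c;x)\,dx=\frac{x^{\alpha}}{\alpha}\,{}_3F_2(a,b,\alpha;c,\alpha+1;x)+\mathrm{const}$; the denominator pair $(2\nu-m+1,\,2\nu-m+2)$ differing by one in the final answer is the fingerprint of this power integration, not of a logarithm. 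Finally, for $k\neq m$ the resummation you defer as ``the main obstacle'' is precisely the step that must be carried out: the paper avoids it entirely by the same two identities (the closed-form antiderivative for $I_3$; the connection formula, a change of variable, and the $_3F_2$ integral formula --- which collapses back to a $_2F_1$ through cancellation of the repeated parameter $2\nu-m$ --- for $I_4$). As written, your plan neither performs nor replaces these steps, so it does not yet constitute a proof.
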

\begin{proof}[\textbf{Proof}]
For $k=m$, we have 

$$\mathcal{L}_{\nu}\left(\phi^{\nu,m}_k\right)(z)=\frac{(-1)^{m}}{\pi}\sqrt{\frac{2(\nu-m)-1}{\pi}}$$
$$\int_{\mathbb{D}}{\left(1-|\xi|^2\right)^{2\nu-m-2}P^{(0,2(\nu-m)-1)}_m\left(1-2|\xi|^2\right)\log\left(\left|z-\xi\right|\right)d\mu(\xi)}$$

$$=(-1)^{m}\sqrt{\frac{2(\nu-m)-1}{\pi}}\int_{0}^{1}{(1-r^2)^{2\nu-m-2}P^{(0,2(\nu-m)-1)}_m\left(1-2r^2\right)\log(\rho\wedge r)dr^2}$$
$$=\frac{(-1)^{m}}{2}\sqrt{\frac{2(\nu-m)-1}{\pi}}\int_{0}^{1}{(1-t)^{2\nu-m-2}P^{(0,2(\nu-m)-1)}_m\left(1-2t\right)\log(\rho^2\vee t)dt}$$
$$=\frac{(-1)^{m}}{2}\sqrt{\frac{2(\nu-m)-1}{\pi}}\left[I_1+I_2\right]
$$
Where 
$$
I_1=\int_{0}^{\rho^2}{(1-t)^{2\nu-m-2}P^{(0,2(\nu-m)-1)}_m\left(1-2t\right)\log(\rho^2\vee t)dt}
$$
and
$$
I_2=\int_{\rho^2}^{1}{(1-t)^{2\nu-m-2}P^{(0,2(\nu-m)-1)}_m\left(1-2t\right)\log(t)dt}
$$
Calculus of $I_1$.
$$
I_1=\log(\rho^2)\int_{\rho^2}^{1}{(1-t)^{2\nu-m-2}P^{(0,2(\nu-m)-1)}_m\left(1-2t\right)dt}
$$
We use the formula 
$$
P^{(\alpha,\beta)}_k(u)=\frac{(1+\alpha)_k}{k!}\ _2F_1\left(\begin{array}{cc}-k,1+\alpha+\beta+k\\ 1+\alpha\end{array}\mid\frac{1-u}{2}\right) 
$$
We have 
$$
I_1=\log(\rho^2)\int_{0}^{\rho^2}{(1-t)^{2\nu-m-2}\ _2F_1\left(\begin{array}{cc}-m,2\nu-m\\ 1\end{array}\mid t\right) dt}
$$
By \cite{PBM}, we have 
$$\int{x^{c-1}(1-x)^{b-c-1}\ _2F_1\left(\begin{array}{cc}a,b\\ c\end{array}\mid x\right) dx}=\frac{1}{c}x^{c}(1-x)^{b-c}\ _2F_1\left(\begin{array}{cc} a+1,b\\ c+1\end{array}\mid x\right)
$$
implies that
$$I_1=\log(\rho^2)\rho^2(1-\rho^2)^{2\nu-m-1}\ _2F_1\left(\begin{array}{cc} -m+1,2\nu-m\\ 2\end{array}\mid \rho^2\right)$$
Calculus of $I_2$.
$$I_2=\int_{\rho^2}^{1}{(1-t)^{2\nu-m-2}P^{(0,2(\nu-m)-1)}_m\left(1-2t\right)\log(t)dt}$$
Use the previous formula in \cite{PBM} and the integration by part gives
$$I_2=\left[t(1-t)^{2\nu-m-1}\ _2F_1\left(\begin{array}{cc} -m+1,2\nu-m\\ 2\end{array}\mid t\right)\log(t)\right]^{1}_{\rho^2}$$
$$-\int_{\rho^2}^{1}{(1-t)^{2\nu-m}\ _2F_1\left(\begin{array}{cc} -m+1,2\nu-m\\ 2\end{array}\mid t\right) dt}$$

$$=-\rho^2\log(\rho^2)(1-\rho^2)^{2\nu-m-1}\ _2F_1\left(\begin{array}{cc} -m+1,2\nu-m\\ 2\end{array}\mid \rho^2\right)$$
$$-\int_{\rho^2}^{1}{(1-t)^{2\nu-m}\ _2F_1\left(\begin{array}{cc} -m+1,2\nu-m\\ 2\end{array}\mid t\right) dt}$$
Calculus of
$$
\int_{\rho^2}^{1}{(1-t)^{2\nu-m}\ _2F_1\left(\begin{array}{cc} -m+1,2\nu-m\\ 2\end{array}\mid t\right) dt}
$$
Use the following formula which has place in \cite{SM}
$$\ _2F_1\left(\begin{array}{cc} a,b\\ c\end{array}\mid t\right)=\frac{\Gamma(c)\Gamma(c-a-b)}{\Gamma(c-a)\Gamma(c-b)}\ _2F_1\left(\begin{array}{cc} a,b\\ a+b-c+1\end{array}\mid 1-t\right)$$
$$+\frac{\Gamma(c)\Gamma(a+b-c)}{\Gamma(a)\Gamma(b)}(1-t)^{c-a-b}\ _2F_1\left(\begin{array}{cc} a,b\\ a+b-c+1\end{array}\mid 1-t\right)$$
We put $a=1-m$, $b=2\nu-m$, $c=2$ and use the formula Boher-Mollerup, for $z\in\mathbb{R}^*_+$, 
$$\Gamma(z)=\frac{e^{-\gamma z}}{z}\prod_{n=1}^{\infty}{\left(1+\frac{z}{n}\right)^{-1}e^{-\frac{z}{n}}}$$
which implies $\frac{1}{\Gamma(1-m)}=0$, then 
$$\ _2F_1\left(\begin{array}{cc} -m+1,2\nu-m\\ 2\end{array}\mid t\right)=\frac{2\Gamma(2(m-\nu)+1)}{m!\Gamma(2+m-2\nu)}\ _2F_1\left(\begin{array}{cc} -m+1,2\nu-m\\ 2(\nu-m)\end{array}\mid 1-t\right)$$
implies that
$$\int_{\rho^2}^{1}{(1-t)^{2\nu-m}\ _2F_1\left(\begin{array}{cc} -m+1,2\nu-m\\ 2\end{array}\mid t\right) dt}$$
$$=\frac{2\Gamma(2(m-\nu)+1)}{m!\Gamma(2+m-2\nu)}\int_{\rho^2}^{1}{(1-t)^{2\nu-m}\ _2F_1\left(\begin{array}{cc} -m+1,2\nu-m\\ 2(\nu-m)\end{array}\mid 1-t\right) dt}$$
By the change $1-t=s$, we get
$$\int_{\rho^2}^{1}{(1-t)^{2\nu-m}\ _2F_1\left(\begin{array}{cc} -m+1,2\nu-m\\ 2\end{array}\mid t\right) dt}$$
$$=\frac{2\Gamma(2(m-\nu)+1)}{m!\Gamma(2+m-2\nu)}\int_{0}^{1-\rho^2}{t^{2\nu-m}\ _2F_1\left(\begin{array}{cc} -m+1,2\nu-m\\ 2(\nu-m)\end{array}\mid t\right)dt}$$
In \cite{PBM} page $44$, 
$$\int{x^{\alpha-1}\ _2F_1\left(\begin{array}{cc} a,b\\ c\end{array}\mid -t\right)dx}=\frac{x^{\alpha}}{\alpha}\ _3F_2\left(\begin{array}{cc} a,b,\alpha\\ c,\alpha+1\end{array}\mid -t\right)+\frac{\Gamma(\alpha)\Gamma(a-\alpha)\Gamma(b-\alpha)\Gamma(c)}{\Gamma(a)\Gamma(b)\Gamma(c-\alpha)}$$ 
Since $a=1-m$, $b=2\nu-m$, $c=2(\nu-m)$, and $\alpha=2\nu-m+1$ we have
$$\frac{\Gamma(\alpha)\Gamma(a-\alpha)\Gamma(b-\alpha)\Gamma(c)}{\Gamma(a)\Gamma(b)\Gamma(c-\alpha)}=0$$
and by the change $t=-s$ 
$$\int_{0}^{1-\rho^2}{t^{2\nu-m+1}\ _2F_1\left(\begin{array}{cc} -m+1,2\nu-m\\ 2(\nu-m)\end{array}\mid t\right) dt}$$
$$=(-1)^{m}\int_{0}^{\rho^2}{t^{2\nu-m}\ _2F_1\left(\begin{array}{cc} -m+1,2\nu-m\\ 2(\nu-m)\end{array}\mid -t\right) dt}$$
$$=(-1)^m\frac{(\rho^2-1)^{2\nu-m+1}}{2\nu-m+1}\ _3F_2\left(\begin{array}{cc} -m+1,2\nu-m,2\nu-m+1\\ 2(\nu-m),2\nu-m+2\end{array}\mid 1-\rho^2\right)$$
we set $\alpha^{\nu,m}_k=\frac{2\Gamma(2(m-\nu)+1)}{m!\Gamma(2+m-2\nu)}$. We get
$$I_2=-\rho^2\log(\rho^2)(1-\rho^2)^{2\nu-m-1}\ _2F_1\left(\begin{array}{cc} -m+1,2\nu-m\\ 2\end{array}\mid \rho^2\right)$$
$$+(-1)^{m}\alpha^{\nu,m}_k\frac{(1-\rho^2)^{2\nu-m-1}}{2\nu-m+1}\ _3F_2\left(\begin{array}{cc} -m+1,2\nu-m,2\nu-m+1\\ 2(\nu-m), 2\nu-m+2\end{array}\mid 1-\rho^2\right)$$
Finally $$\mathcal L_{\nu}(\phi^{\nu,m}_k)(z)=\frac{\alpha^{\nu,m}_k}{2(2\nu-m+1)}\sqrt{\frac{2(\nu-m)-1}{\pi}}(1-\rho^2)^{2\nu-m-1}$$
$$\times \ _3F_2\left(\begin{array}{cc} -m+1,2\nu-m,2\nu-m+1\\ 2(\nu-m), 2\nu-m+2\end{array}\mid 1-\rho^2\right)$$ 
Now if $k>m$, set $z=\rho e^{it}$.

$$\mathcal L_{\nu}(\phi^{\nu,m}_k)(z)=\gamma^{\nu,m}_k\int_{\mathbb{D}}{(1-|\xi|^2)^{2\nu-m-2}\xi^{k-m}\log\left(\frac{1}{|z-\xi|}\right)P^{(k-m,2(\nu-m)-1)}_m\left(1-2|\xi|^2\right) d\mu(\xi)}$$

$$=\gamma^{\nu,m}_k\int_{0}^{1}{(1-r^2)^{2\nu-m-2}r^{k-m+1}P^{(k-m,2(\nu-m)-1)}_m\left(1-2r^2\right)\int_{0}^{2\pi}{e^{i(k-m)\theta}\log\left(\frac{1}{|z-r^{i\theta}|}\right)d\theta}dr}$$
$$=\frac{\pi\gamma^{\nu,m}_k e^{i(k-m)t}}{2(k-m)}\int_{0}^{1}{(1-r^2)^{2\nu-m-2}r^{k-m}P^{(k-m,2(\nu-m)-1)}_m\left(1-2r^2\right)\left((\frac{r}{\rho})^{k-m}\wedge (\frac{\rho}{r})^{k-m}\right) dr^2}$$
$$=\frac{\pi\gamma^{\nu,m}_k e^{i(k-m)t}}{2(k-m)} \Big( \int_{0}^{\rho}{(1-r^2)^{2\nu-m-2}r^{k-m}P^{(k-m,2(\nu-m)-1)}_m
\left(1-2r^2\right)\left((\frac{r}{\rho})^{k-m}\wedge (\frac{\rho}{r})^{k-m}\right) dr^2} $$
$$+\int_{\rho}^{1}{(1-r^2)^{2\nu-m-2}r^{k-m}P^{(k-m,2(\nu-m)-1)}_m\left(1-2r^2\right) 
\left((\frac{r}{\rho})^{k-m}\wedge (\frac{\rho}{r})^{k-m}\right) dr^2} \Big)$$
We set $$I_3=\int_{0}^{\rho}{(1-r^2)^{2\nu-m-2}r^{k-m}P^{(k-m,2(\nu-m)-1)}_m\left(1-2r^2\right)\left((\frac{r}{\rho})^{k-m}\wedge (\frac{\rho}{r})^{k-m}\right) dr^2}$$
and $$I_4=\int_{\rho}^{1}{(1-r^2)^{2\nu-m-2}r^{k-m}P^{(k-m,2(\nu-m)-1)}_m\left(1-2r^2\right)\left((\frac{r}{\rho})^{k-m}\wedge (\frac{\rho}{r})^{k-m}\right) dr^2}$$
Calculus of $I_3$.
$$I_3=\frac{\rho^{m-k}(1+k-m)_m}{m!}\int_{0}^{\rho^2}{t^{k-m}(1-t)^{2\nu-m-2}\ _2F_1\left(\begin{array}{cc}-m,2(\nu-m)+k\\ 1+k-m\end{array}\mid t\right) dt}$$
By the formula $$\int{x^{c-1}(1-x)^{b-c-1}\ _2F_1\left(\begin{array}{cc}a,b\\ c\end{array}\mid x\right) dx}=\frac{1}{c}x^{c}(1-x)^{b-c}\ _2F_1\left(\begin{array}{cc} a+1,b\\ c+1\end{array}\mid x\right)$$
we have
$$I_3=\frac{(1+k-m)_m}{m!(k-m+1)}\rho^{k-m+2}(1-\rho^2)^{2\nu-m-1}\ _2F_1\left(\begin{array}{cc}-m+1,2(\nu-m)+k\\ 2+k-m\end{array}\mid \rho^2\right)$$
Calculus of $I_4$.
$$I_4=\int_{\rho}^{1}{(1-r^2)^{2\nu-m-2}r^{k-m}P^{(k-m,2(\nu-m)-1)}_m\left(1-2r^2\right)\left((\frac{r}{\rho})^{k-m}\wedge (\frac{\rho}{r})^{k-m}\right) dr^2}$$
$$=\frac{\rho^{k-m}(1+k-m)_m}{2m!}\int_{\rho^2}^{1}{(1-t)^{2\nu-m-2}\ _2F_1\left(\begin{array}{cc}-m,2(\nu-m)+k\\ 1+k-m\end{array}\mid t\right) dt}$$
As the previous 
$$\int_{\rho^2}^{1}{(1-t)^{2\nu-m-2}\ _2F_1\left(\begin{array}{cc}-m,2(\nu-m)+k\\ 1+k-m\end{array}\mid t\right) dt}$$
$$=\alpha^{\nu,m}_k \int_{\rho^2}^{1}{(1-t)^{2\nu-m-2}\ _2F_1\left(\begin{array}{cc}-m+1,2\nu-m\\ 2(\nu-m)\end{array}\mid 1-t\right) dt}$$
$$=(-1)^m\alpha^{\nu,m}_k \int_{\rho^2-1}^{0}{t^{2\nu-m-2}\ _2F_1\left(\begin{array}{cc}-m+1,2\nu-m\\ 2(\nu-m)\end{array}\mid t\right) dt}$$
$$=\frac{\alpha^{\nu,m}_k}{2\nu-m-1}(1-\rho^2)^{2\nu-m-1}\ _3F_2\left(\begin{array}{cc}-m+1,2\nu-m,2\nu-m-1\\ 2(\nu-m),2\nu-m\end{array}\mid 1-\rho^2\right)$$
also $$\ _3F_2\left(\begin{array}{cc}-m+1,2\nu-m,2\nu-m-1\\ 2(\nu-m),2\nu-m\end{array}\mid 1-\rho^2\right)=\ _2F_1\left(\begin{array}{cc}-m+1,2\nu-m-1\\ 2(\nu-m)\end{array}\mid 1-\rho^2\right)$$
Now if $k<m$. We have $$\phi^{\nu,m}_k(z)=(-1)^k\sqrt{\frac{2(\nu-m)-1}{\pi}\frac{k!\Gamma(2(\nu-m)+m)}{m!\Gamma(2(\nu-m)+k)}}(1-|z|^2)^{-m}\overline{z}^{m-k}P^{(m-k,2(\nu-m)-1)}_k\left(1-2|z|^2\right)$$
By the formula 
\begin{equation*}
\frac{\Gamma \left( m+1\right) }{\Gamma \left( m-s+1\right) }P_{m}^{\left(
-s,\alpha \right) }\left( u\right) =\frac{\Gamma \left( m+\alpha +1\right) }{%
\Gamma \left( m-s+\alpha +1\right) }\left( \frac{u-1}{2}\right)
^{s}P_{m-s}^{\left( s,\alpha \right) }\left( u\right) ,1\leq s\leq m
\end{equation*}\label{s}%
and put $s=m-k$ and $\alpha=2(\nu-m)-1$, we have
$$P^{(m-k,2(\nu-m)-1)}_k\left(1-2|z|^2\right)=\frac{m!\Gamma(k+\alpha+1)}{k!\Gamma(m+\alpha+1)}P^{(k-m,2(\nu-m)-1)}_m\left(1-2|z|^2\right)$$
substituting in the expression of $\phi^{\nu,m}_k(z)$, we get
$$\phi^{\nu,m}_k(z)=(-1)^m\sqrt{\frac{2(\nu-m)-1}{\pi}\frac{m!\Gamma(2(\nu-m)+k)}{k!\Gamma(2(\nu-m)+m)}}(1-|z|^2)^{-m}z^{k-m}P^{(k-m,2(\nu-m)-1)}_m\left(1-2|z|^2\right)$$
it's the same formula for $k>m$, which prove the same formula of $\mathcal L_{\nu}(\phi^{\nu,m}_k)(z)$ if $k>m$.
\end{proof}
\begin{remark}
By the previous formula in \cite{SM}, we have
$$\ _2F_1\left(\begin{array}{cc}-m+1,2(\nu-m)+k\\ 2(\nu-m)\end{array}\mid \rho^2\right)=\frac{k!\Gamma(2+k-m)}{\Gamma(1-2(\nu-m))}\ _2F_1\left(\begin{array}{cc}-m+1,2(\nu-m)+k\\ 2(\nu-m)\end{array}\mid 1-\rho^2\right)$$
\end{remark}

\subsection{The spectrum of $\mathcal L_{\nu}$}
\begin{proposition}
If $k\neq m$, then
 $$\lambda_k=\sqrt{J_1+J_2+J_3}$$
where
$$J_1=\left(\frac{(1+k-m)_m}{m!(k-m+1)}\right)^2\sum_{n=0}^{\infty}{A_n\frac{\Gamma(2n+2k-2m+6-1)\Gamma(4\nu-2m-1)}{\Gamma(2n+2k-4m+4\nu+6)}}$$

$$J_2=\left(\frac{\alpha^{\nu,m}_k}{2\nu-m-1}\right)^2\sum_{n=0}^{\infty}{A_n\frac{\Gamma(4\nu-2m-1)\Gamma(2n+2)}{\Gamma(2n+4\nu-2m+1)}}$$

and 

$$ J_3=\frac{(1+k-m)_m \alpha^{\nu,m}_k}{m!(k-m+1)(2\nu-m-1)}\left(\sum_{n=0}^{\infty}{A_n\frac{\Gamma(k-m+2)\Gamma(4\nu-2m-1)}{\Gamma(4\nu-k-3m)}}\right)$$

If $k=m$ then
\begin{equation}
\lambda^2_k=\frac{\alpha^{\nu,m}_k \left(2(2\nu-m)-1\right)}{8\left(\pi(2\nu-m+1)\right)}\sum_{n=0}^{\infty}{\frac{B_n}{n+2\nu-m}}
\end{equation}
where 
$$B_n=\sum_{n=0}^{\infty}{\frac{\Gamma(-m+1)\Gamma(2\nu-m)\Gamma(2(\nu-m)+1)}{n!\Gamma(2(\nu-m)\Gamma(2\nu-m+2)} }$$
\end{proposition}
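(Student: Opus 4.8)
The plan is to turn the spectral problem into a sequence of norm computations, using the orthogonality of the image family $\{\mathcal{L}_\nu(\phi_k^{\nu,m})\}$ proved in the Corollary above. Set $T:=\mathcal{L}_\nu P_m^\nu$. Since each orthonormal basis vector $\Phi_k^{\nu,m}$ is a scalar multiple $\gamma_k^{\nu,m}$ of $\phi_k^{\nu,m}$, the images $T\Phi_k^{\nu,m}=\mathcal{L}_\nu(\Phi_k^{\nu,m})$ remain pairwise orthogonal in $L^{2,\nu}(\mathbb{D})$. Hence
\[
\langle T^{*}T\,\Phi_j^{\nu,m},\Phi_k^{\nu,m}\rangle=\langle T\Phi_j^{\nu,m},T\Phi_k^{\nu,m}\rangle=\|T\Phi_k^{\nu,m}\|^{2}\,\delta_{jk},
\]
so each $\Phi_k^{\nu,m}$ is an eigenvector of $T^{*}T$ with eigenvalue $\|T\Phi_k^{\nu,m}\|^{2}$. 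Because $\{\Phi_k^{\nu,m}\}$ is an orthonormal basis of $\mathcal{A}_{\nu,m}^{2}(\mathbb{D})$, this is already a singular value decomposition of $T$, and therefore $\lambda_k=\|\mathcal{L}_\nu(\Phi_k^{\nu,m})\|_{L^{2,\nu}(\mathbb{D})}$. The whole statement thus reduces to evaluating these norms from the closed forms of the preceding Proposition, in which the normalising constant $\gamma_k^{\nu,m}$ (respectively $\sqrt{(2(\nu-m)-1)/\pi}$) is already built in.

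For $k\neq m$ I would insert $\mathcal{L}_\nu(\Phi_k^{\nu,m})(z)=\frac{\pi\gamma_k^{\nu,m}e^{i(k-m)t}}{2(k-m)}\,(I_3+I_4)$. The angular exponential has modulus one and $I_3,I_4$ depend only on $\rho=|z|$, so in polar coordinates the norm collapses to a single radial integral, the $\theta$-integration contributing the factor $2\pi$. Expanding $(I_3+I_4)^2=I_3^2+2I_3I_4+I_4^2$ splits this integral into exactly three pieces: $J_1$ from $I_3^2$, $J_2$ from $I_4^2$, and $J_3$ from the cross term $2I_3I_4$. This matches the statement, whose prefactors $\big(\tfrac{(1+k-m)_m}{m!(k-m+1)}\big)^2$, $\big(\tfrac{\alpha_k^{\nu,m}}{2\nu-m-1}\big)^2$ and their product are precisely the squared coefficients of $I_3$, $I_4$ and their product, once the common scalar $\big(\pi\gamma_k^{\nu,m}/2(k-m)\big)^2\cdot 2\pi$ is absorbed into the series coefficients $A_n$.

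Each of the three radial integrals is then evaluated termwise. Writing $t=\rho^{2}$, the two terminating ${}_2F_1$ factors become polynomials in $t$; after using the Remark to express both hypergeometric factors about the same point, I would form their Cauchy product, whose $n$-th coefficient is the number $A_n$ of the statement, and then integrate the resulting monomials against the power $(1-t)^{p}$ assembled from the factors $(1-\rho^2)^{2\nu-m-1}$ (squared, or multiplied together in the cross term) and the weight $(1-|z|^2)^{2\nu-2}$. Every monomial integral is a Beta integral $\int_0^1 t^{a}(1-t)^{b}\,dt=\Gamma(a+1)\Gamma(b+1)/\Gamma(a+b+2)$, which reproduces exactly the ratios of Gamma functions displayed in $J_1,J_2,J_3$. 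The case $k=m$ is handled identically, starting from $\mathcal{L}_\nu(\Phi_m^{\nu,m})(z)=\beta(\rho)\,{}_3F_2(\cdots\mid 1-\rho^2)$: squaring $\beta(\rho)\,{}_3F_2$, expanding the ${}_3F_2$ as a power series in $1-\rho^2$, and integrating term by term against the weight gives the single series $\sum_{n}B_n/(n+2\nu-m)$, the denominators $n+2\nu-m$ being the values of the corresponding Beta integrals.

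The main obstacle is precisely this termwise integration of products of hypergeometric series: one must organise the Cauchy product so that the coefficient of $t^{n}$ is identifiable as $A_n$ (respectively $B_n$), align the arguments of the two ${}_2F_1$'s via the linear transformation of the Remark before multiplying, and justify the interchange of summation and integration. The last point is not serious here, since the terminating parameter $-m+1$ makes the outer polynomial finite and the remaining series converge absolutely on $[0,1]$ under the standing assumption $2\nu>1$; the real care lies in the bookkeeping of the Gamma arguments rather than in any analytic difficulty.
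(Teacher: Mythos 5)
Your proposal follows essentially the same route as the paper's own proof: identify $\lambda_k$ with the norm $\left\Vert \mathcal{L}_{\nu}(\Phi_k^{\nu,m})\right\Vert_{L^{2,\nu}(\mathbb{D})}$ (a step the paper takes for granted and you justify explicitly via the orthogonality Corollary), then pass to polar coordinates, expand $(I_3+I_4)^2$ into the three pieces giving $J_1$, $J_2$, $J_3$ (respectively square the ${}_3F_2$ when $k=m$), form the Cauchy products whose coefficients are $A_n$ (resp. $B_n$), and integrate term by term using Beta integrals. This matches the paper's computation step for step, and where you differ --- carrying the prefactor $\bigl(\pi\gamma_k^{\nu,m}/2(k-m)\bigr)^2\cdot 2\pi$ and the weight $(1-|z|^2)^{2\nu-2}$ through the radial integral rather than silently dropping them --- you are being more careful than the source, not taking a different approach.
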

\begin{proof}[\textbf{Proof}]

If $k\neq m$. We have $$\left(\mathcal L_{\nu}(\phi^{\nu,m}_k)\right)(z)=\frac{\pi\gamma^{\nu,m}_k\left(I_3+I_4\right)}{2(k-m)}e^{i(k-m)t}$$

We set  $\mathcal H=\left(L^2(\mathbb{D}),(1-|\xi|^2)^{2\nu-2}d\mu(\xi) \right)$,  $I_3=I_3(\rho)$, and $I_4=I_4(\rho)$  we have

$$\lambda^2_k=\left\langle \mathcal L_{\nu}(\phi^{\nu,m}_k),\mathcal L_{\nu}(\phi^{\nu,m}_k)\right\rangle_{\mathcal H}$$

$$=\frac{\pi^2\gamma^{\nu,m}_k}{(k-m)}\int_{0}^{1}{\left(I_3(\rho)+I_4(\rho)\right)^2 \rho d\rho}$$

Calculus of $\int_{0}^{1}{\left(I_3(\rho)\right)^2 \rho d\rho}$.

$$I_3(\rho)=\frac{(1+k-m)_m}{m!(k-m+1)}\rho^{k-m+2}(1-\rho^2)^{2\nu-m-1}\ _2F_1\left(\begin{array}{cc}-m+1,2(\nu-m)+k\\ 2+k-m\end{array}\mid \rho^2\right)$$

Since

 $$\ _2F_1\left(\begin{array}{cc}-m+1,2(\nu-m)+k\\ 2+k-m\end{array}\mid \rho^2\right)=\sum_{n=0}^{\infty}{\frac{(-m+1)_n(2(\nu-m)+k)_n}{(2+k-m)_n}\frac{\rho^{2n}}{n!}}$$

then

$$\left(I_3(\rho)\right)^2=\left(\frac{(1+k-m)_m}{m!(k-m+1)}\right)^2\sum_{n=0}^{\infty}{A_n \rho^{2n}(1-\rho^2)^{4\nu-2m-2}}$$

where

 $$A_n=\frac{1}{n!}\sum_{i=0}^{n}{\frac{(-m+1)_i(-m+1)_{n-i} (2(\nu-m)+k)_i(2(\nu-m)+k)_{n-i}}{(2(\nu-m))_i(2(\nu-m))_{n-i}}}$$

Thus

$$J_1=\int_{0}^{1}{\left(I_3(\rho)\right)^2 \rho d\rho}=\left(\frac{(1+k-m)_m}{m!(k-m+1)}\right)^2\sum_{n=0}^{\infty}{A_n\int_{0}^{1}{\rho^{2n+2k-2m+6-1}(1-\rho^2)^{4\nu-2m-1-1} d\rho}}$$

Use the fact that 

$$\int_{0}^{1}{t^{\alpha-1}(1-t)^{\beta-1} dt}=\frac{\Gamma(\alpha)\Gamma(\beta)}{\Gamma(\alpha+\beta)}$$

implies
\begin{equation}
\int_{0}^{1}{\left(I_3(\rho)\right)^2 \rho d\rho}=\left(\frac{(1+k-m)_m}{m!(k-m+1)}\right)^2
\sum_{n=0}^{\infty}{A_n\frac{\Gamma(2n+2k-2m+6-1)\Gamma(4\nu-2m-1)}{\Gamma(2n+2k-4m+4\nu+6)}}
\end{equation}
Calculus of $\int_{0}^{1}{\left(I_4(\rho)\right)^2 \rho d\rho}$.

In the same
\begin{equation}
J_2=\int_{0}^{1}{\left(I_4(\rho)\right)^2 \rho d\rho}=\left(\frac{\alpha^{\nu,m}_k}{2\nu-m-1}\right)^2\sum_{n=0}^{\infty}{A_n\frac{\Gamma(4
\nu-2m-1)\Gamma(2n+2)}{\Gamma(2n+4\nu-2m+1)}}
\end{equation}

Calculus of $2\int_{0}^{1}{\left(I_3(\rho)\right)\left(I_4(\rho)\right) \rho d\rho}$.
\begin{equation}
J_3=2\int_{0}^{1}{\left(I_3(\rho)\right)\left(I_4(\rho)\right) \rho d\rho}=\frac{(1+k-m)_m \alpha^{\nu,m}_k}{m!(k-m+1)(2\nu-m-1)}
\sum_{n=0}^{\infty}{A_n\frac{\Gamma(k-m+2)\Gamma(4\nu-2m-1)}{\Gamma(4\nu-k-3m)}}
\end{equation}

If $k=m$.\\

Since 

$$\left(\ _3F_2\left(\begin{array}{cc}-m+1,2\nu-m,2(\nu-m)+1\\ 2(\nu-m),2\nu-m+2\end{array}\mid 1-\rho^2\right)\right)^2$$
$$=\sum_{n=0}^{\infty}{\frac{\Gamma(-m+1)\Gamma(2\nu-m)\Gamma(2(\nu-m)+1)}{n!\Gamma(2(\nu-m)\Gamma(2\nu-m+2)} (1-\rho^2)^{n}}$$

$$\lambda^2_k=\frac{\alpha^{\nu,m}_k \left(2(2\nu-m)-1\right)}{8\left(\pi(2\nu-m+1)\right)}\sum_{n=0}^{\infty}{B_n\int_{0}^{1}{(1-\rho^2)^{n+2\nu-m-1}d\rho}}$$
$$=\frac{\alpha^{\nu,m}_k \left(2(2\nu-m)-1\right)}{8\left(\pi(2\nu-m+1)\right)}\sum_{n=0}^{\infty}{\frac{B_n}{n+2\nu-m}}$$
where 
$$B_n=\sum_{n=0}^{\infty}{\frac{\Gamma(-m+1)\Gamma(2\nu-m)\Gamma(2(\nu-m)+1)}{n!\Gamma(2(\nu-m)\Gamma(2\nu-m+2)} }$$

\end{proof}

\section{Asymptotic behavior of singular values $\lambda_k$ as $k\longrightarrow\infty$}

\begin{proposition}
$$\lambda_k\sim C \sqrt{k^{m-4\nu+1}},\ \text{as}\ k\longrightarrow\infty$$
where $C$ is a constant
\end{proposition}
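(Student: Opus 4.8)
The plan is to start from the closed forms $\lambda_k^2 = J_1 + J_2 + J_3$ established in the preceding proposition (only the case $k\neq m$ matters as $k\to\infty$) and to extract the leading power of $k$ in each of the three pieces. The essential structural fact is that every hypergeometric factor entering $I_3$ and $I_4$ terminates, because the top parameter $-m+1$ is a non-positive integer; hence each $J_i$ is a \emph{finite} sum (of length at most $2m-1$) of ratios of Gamma functions whose arguments are affine in $k$. The limit $k\to\infty$ can therefore be organised term by term, the only analytic input being the ratio asymptotics $\Gamma(k+a)/\Gamma(k+b)\sim k^{a-b}$ and $(k+c)_j\sim k^{j}$, both immediate from Stirling's formula.

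Concretely, I would first record the growth of the normalisation: from the norm formula for $\rho_k^{\nu,m}$ one gets $\rho_k^{\nu,m}\asymp k^{2m-2\nu+1}$, so that the weight $(\gamma_k^{\nu,m})^2\asymp k^{2\nu-2m-1}$, while $\alpha_k^{\nu,m}$ is independent of $k$ and the remaining algebraic prefactors $(1+k-m)_m/\bigl(m!(k-m+1)\bigr)$ and $1/(k-m)$ contribute further explicit powers. Inserting these into $J_1,J_2,J_3$ and isolating the dominant Gamma-ratio in each finite sum produces asymptotics $J_i\sim c_i\,k^{p_i}$; comparing the exponents $p_1,p_2,p_3$ singles out the dominant contribution, and recombining with the normalisation yields
\[
\lambda_k^{2}\sim C^{2}\,k^{\,m-4\nu+1},
\]
from which the stated estimate follows upon taking the square root.

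The delicate point, which I expect to be the main obstacle, is that the naive leading term in the finite $n$-sums cancels. The terminating series ${}_2F_1(-m+1,2(\nu-m)+k;2+k-m;\,\cdot\,)$ governing $I_3$ vanishes to order $k^{1-m}$ at the endpoint $\rho=1$, which is precisely where the factor $\rho^{2k}$ forces the integrals $J_i=\int_0^1(\cdots)\rho\,d\rho$ to concentrate; since $\sum_n A_n$ equals the square of that series evaluated at $1$, it is several powers of $k$ smaller than its individual summands, so factoring the $n$-independent part of the Gamma-ratio out of $\sum_n A_n$ is illegitimate. To locate the genuine leading order I would abandon the term-by-term estimate near $\rho=1$ and return to the integral representations, rescale $u=k(1-\rho^2)$, and use the $x\mapsto 1-x$ connection formula (the Remark) to re-expand each terminating ${}_2F_1$ in powers of $1-\rho^2$; in the variable $u$ the hypergeometric factor tends to a fixed polynomial and each $J_i$ collapses to a convergent integral of the form $\int_0^\infty e^{-u}u^{\,s}\,(\cdots)\,du$ times an explicit power of $k$. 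This Laplace-type reduction both supplies the constant $C$ and, because the limiting integrand in the dominant piece is of one sign, certifies that the leading coefficient is nonzero and that no cancellation occurs among $J_1,J_2,J_3$.
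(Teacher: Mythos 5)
Your ``delicate point'' is correct, and it is worth stressing that it is precisely a description of what the paper's own proof does: the paper applies $\Gamma(k+a)/\Gamma(k+b)\sim k^{a-b}$ inside each $n$-sum and then factors $\sum_n A_n$ out as though it were a constant, even though $A_n$ depends on $k$ through the factors $(2(\nu-m)+k)_i$, so that ``constant'' is itself a power of $k$ --- and, as you observe, one that is far smaller than its largest summand because of cancellation. The paper's conclusion is moreover internally inconsistent: it asserts $J_2=\mathcal{O}(1)$ and $J_3\sim c\,k^{m-4\nu+1}$, and since $m-4\nu+1<0$ the latter tends to $0$, so if both estimates were taken at face value then $J_2$, not $J_3$, would govern $\lambda_k$. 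For the same reasons, the first half of your plan (term-by-term comparison of the exponents $p_1,p_2,p_3$, which you claim ``yields $\lambda_k^2\sim C^2k^{m-4\nu+1}$'') proves nothing: you cannot assert the conclusion of a step you then declare illegitimate. Note also that your finiteness premise fails in the base case $m=0$, where the upper parameter $-m+1=1$ is positive, the ${}_2F_1$'s do not terminate, and the $n$-sums are genuinely infinite.

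The genuine gap lies in the second half: the rescaling $u=k(1-\rho^2)$, which you yourself designate as the actual proof, is never carried out --- only its outcome is asserted, and the assertion is false. Since the functions $\mathcal{L}_{\nu}\phi_k^{\nu,m}$ are mutually orthogonal, $\lambda_k=\Vert\mathcal{L}_{\nu}\phi_k^{\nu,m}\Vert/\Vert\phi_k^{\nu,m}\Vert$, and in the simplest admissible case $m=0$, $\nu=1$ (unweighted logarithmic potential, $\phi_k(z)=z^k$) everything is exactly computable: writing $z=\rho e^{it}$, the Fourier expansion of $\log|z-\xi|^{-1}$ gives
\begin{equation*}
\mathcal{L}_{\nu}[\xi^k](z)=\frac{\pi e^{ikt}}{k}\left(\frac{\rho^{k+2}}{2k+2}+\frac{\rho^{k}(1-\rho^{2})}{2}\right),
\end{equation*}
whence, by exact Beta integrals,
\begin{equation*}
\lambda_k^{2}
=\pi^{2}\left(\frac{1}{4k^{2}(k+1)(k+3)}+\frac{1}{k^{2}(k+2)(k+3)}\right)
\sim\frac{5\pi^{2}}{4}\,k^{-4},
\qquad\text{i.e.}\qquad
\lambda_k\sim\frac{\sqrt{5}\,\pi}{2}\,k^{-2},
\end{equation*}
whereas the proposition claims $\lambda_k\sim Ck^{(m-4\nu+1)/2}=Ck^{-3/2}$ in this case. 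Carrying out your rescaling for general parameters gives the same picture: on the Laplace region $1-\rho^{2}\asymp k^{-1}$ the hypergeometric factors do collapse to fixed (Laguerre-type) polynomials as you predict, but the resulting exponent is $\lambda_k\asymp k^{-2\nu}$ for every fixed $m$, not $k^{(m-4\nu+1)/2}$. So the computation you defer is not a finishing touch that ``supplies the constant $C$ and certifies that the leading coefficient is nonzero''; it is the crux, it produces a different exponent, and an honest execution of your own (methodologically sound) plan ends by refuting the proposition and the paper's proof of it, not by certifying them.
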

\begin{proof}[\textbf{Proof}]
If $k> m$, then
 $$\lambda_k=\sqrt{J_1+J_2+J_3}$$
where
$$J_1=\left(\frac{(1+k-m)_m}{m!(k-m+1)}\right)^2\sum_{n=0}^{\infty}{A_n\frac{\Gamma(2n+2k-2m+6-1)\Gamma(4\nu-2m-1)}{\Gamma(2n+2k-4m+4\nu+6)}}$$

$$J_2=\left(\frac{\alpha^{\nu,m}_k}{2\nu-m-1}\right)^2\sum_{n=0}^{\infty}{A_n\frac{\Gamma(4\nu-2m-1)\Gamma(2n+2)}{\Gamma(2n+4\nu-2m+1)}}$$

and 

$$ J_3=\frac{(1+k-m)_m \alpha^{\nu,m}_k}{m!(k-m+1)(2\nu-m-1)}\left(\sum_{n=0}^{\infty}{A_n\frac{\Gamma(k-m+2)\Gamma(4\nu-2m-1)}{\Gamma(4\nu-k-3m)}}\right)$$

The limit of $\lambda_k$ as $k\longrightarrow\infty$.\\ 
We use the formula $$\frac{\Gamma(k+a)}{\Gamma(k+b)}\sim k^{a-b}$$
we have 
\begin{equation}
J_1 \sim \left(\frac{k^{-1-m}}{m!}\right)^2\sum_{n=0}^{\infty}{A_n\Gamma(4\nu-2m-1) (2k)^{2m-4\nu-1}}
\sim k^{-4\nu-1} 2^{2m-4\nu-1}\Gamma(4\nu-2m-1)\sum_{n=0}^{\infty}{\frac{A_n}{m!}}
\end{equation}
\begin{equation}
J_2=\mathcal O_{k\sim\infty} \left(1\right)
\end{equation}
In the same
\begin{equation}
J_3\sim k^{m-4\nu+1}\frac{\alpha^{\nu,m}_k \Gamma(4\nu-2m-1)}{m!(2\nu-m-1)}\sum_{n=0}^{\infty}{A_n}
\end{equation}
Therefore 
$$\lambda_k\sim C \sqrt{k^{m-4\nu+1}}$$
where $C$ is a constant
\end{proof}

\providecommand{\href}[2]{#2}

\address{
Laboratoire de Math\'ematiques Appliqu\'ees \& Calcul Scientifique, \\ 
Sultan Moulay Slimane University, BP 523, 23000  Beni Mellal, Morocco\\ 
\email{mhamedmaster@gmail.com}\\
}

\end{document}